\documentclass[12pt]{amsart}
\setlength{\textwidth}{6.0in}
\setlength{\oddsidemargin}{0.25in}
\setlength{\evensidemargin}{0.25in}

\setlength{\marginparwidth}{0.9in}
\setlength{\marginparsep}{0.1in}

\usepackage{amssymb, amsmath}

\usepackage{hyperref}
\hypersetup{colorlinks,citecolor=blue,linkcolor=blue}

\newcommand\SL{\operatorname{SL}}
\newcommand\GL{\operatorname{GL}}
\newcommand\SO{\operatorname{SO}}

\newcommand\N{{\mathbb N}}
\newcommand\Z{{\mathbb Z}}
\newcommand\Q{{\mathbb Q}}
\newcommand\R{{\mathbb R}}
\newcommand\C{{\mathbb C}}
\newcommand\A{{\mathbb A}}
\newcommand\F{{\mathbb F}}
\newcommand\cO{{\mathcal O}}

\newcommand\D{\mathcal{D}}

\newcommand\Da{\mathfrak{D}}

\newcommand\G{{\mathrm G}}

\newcommand\Pa{{\mathrm P}}



\newcommand\ru{\mathrm{L}_H^u(x)}
\newcommand\rnu{\mathrm{L}_H^{nu}(x)}
\newcommand\muu{m_H^u(x)}
\newcommand\munu{m_H^{nu}(x)}

\newcommand\Ker{\mathrm{Ker}}

\newcommand\rk{\mathrm{rk}}
\newcommand\Gal{\mathrm{Gal}}

\newcommand\An{\mathrm{A}}
\newcommand\Bn{\mathrm{B}}
\newcommand\Cn{\mathrm{C}}
\newcommand\Dn{\mathrm{D}}
\newcommand\En{\mathrm{E}}
\newcommand\Fn{\mathrm{F_4}}
\newcommand\Gn{\mathrm{G_2}}

\numberwithin{equation}{section}

\newtheorem{theorem}{Theorem}

\newtheorem{prop}{Proposition}[section]
\newtheorem{lemma}[prop]{Lemma}
\newtheorem{thm}[prop]{Theorem}
\newtheorem{cor}[prop]{Corollary}

\newtheorem{conja}{Conjecture}[prop]
\theoremstyle{definition}
\newtheorem{rem}[prop]{Remark}
\newtheorem{example}[prop]{Example}
\newtheorem{step}{Step}

\begin{document}

\title{Counting non-uniform lattices}

\dedicatory{Dedicated to Aner Shalev on his 60th birthday}

\author{Mikhail Belolipetsky}
\thanks{Belolipetsky is partially supported by CNPq and FAPERJ}
\address{
IMPA\\
Estrada Dona Castorina, 110\\
22460-320 Rio de Janeiro, Brazil}
\email{mbel@impa.br}

\author{Alexander Lubotzky}
\thanks{Lubotzky is partially supported by ERC, NSF and ISF}
\address{
Institute of Mathematics\\
Hebrew University\\
Jerusalem 91904\\
ISRAEL}
\email{alex.lubotzky@mail.huji.ac.il}

\subjclass[2000]{22E40 (20G30, 20E07)}


\begin{abstract}
In \cite{BGLM} and \cite{GLNP} it was conjectured that if $H$ is a simple Lie group of real rank at least $2$, then the number of conjugacy classes of (arithmetic) lattices in $H$ of covolume at most $x$ is $x^{(\gamma(H)+o(1))\log x/\log\log x}$ where $\gamma(H)$ is an explicit constant computable from the (absolute) root system of $H$. In \cite{BL2} we disproved this conjecture. In this paper we prove that for most groups $H$ the conjecture is actually true if we restrict to counting only non-uniform lattices.
\end{abstract}

\maketitle


\section{Introduction}

Let $H$ be a non-compact simple Lie group endowed with a fixed Haar measure
$\mu$ 
and let $X$ be the associated
symmetric space. A classical theorem of Wang \cite{Wa} asserts that if $H$ is
not locally isomorphic to $\SL_2(\R)$ or $\SL_2(\C)$, then for every $0< x
\in\R$ there exist only finitely many Riemannian orbifolds covered by $X$ with
volume at most $x$.
Let $\mathrm{L}_H(x)$ (resp. $\mathrm{AL}_H(x)$) denotes the number of conjugacy
classes of lattices (resp. arithmetic lattices) in $H$ of covolume at most $x$.
By Wang's theorem, if $H$ not locally $\SL_2(\R)$ or $\SL_2(\C)$, 
then $\mathrm{L}_H(x)$ is finite for every $x$. This is also true for
$\mathrm{AL}_H(x)$ even for $H = \SL_2(\R)$ or $\SL_2(\C)$ by a theorem of Borel \cite{Bo2}.

In recent years there has been a growing interest in the asymptotic
behavior of these functions (cf. \cite{BGLM}, \cite{G1}, \cite{G2}, \cite{GLNP}, \cite{B}, \cite{BGLS}, \cite{BL2}, \cite{BLi}, and also \cite{SG} for the positive characteristic case).
Super-exponential upper bounds were given in many cases, bounds which are
optimal at least for the rank one groups $\SO(n,1)$. But for higher rank Lie groups  
the growth rate of the number of lattices in much slower as it is shown in \cite{BL2} and \cite{SG}. 

Recall that if $H$ has $\R$-rank$(H)\ge 2$, then by Margulis's arithmeticity
theorem every lattice $\Gamma$ in $H$ is {\em arithmetic}, i.e. there exists
a number field $k$ with ring of integers $\cO$ and the set of archimedean
valuations $V_\infty$, an absolutely simple, simply connected $k$-group $\G$
and an epimorphism $\phi: \prod_{v\in V_\infty} \G(k_v)^o \to H$, such that
$\Ker(\phi)$ is compact and $\phi(\G(\cO))$ is commensurable with $\Gamma$ (see \cite[Theorem~1, p.~2]{Ma}).
Thus for groups $H$ of real rank at least $2$, we have $\mathrm{L}_H(x)=\mathrm{AL}_H(x)$.
Moreover, Serre conjectured (\cite{S1}) that for all lattices $\Gamma$ in such $H$,
$\Gamma$ has the {\em congruence subgroup property} (CSP), i.e.
$\Ker(\widehat{\G(\cO)} \to \G(\widehat{\cO}))$ is finite in the notations above.
Serre's  conjecture is known by now for most arithmetic groups and, in particular, for all non-uniform lattices (see \cite[Chapter~9]{PR}).
Hence the question of counting (non-uniform) lattices in $H$ boils down to counting
maximal arithmetic groups and their congruence subgroups. Maximal arithmetic subgroups
were counted in \cite{B} and very precise estimates for the number of congruence subgroups in a given
lattice were obtained in \cite{Lu}, \cite{GLP} and \cite{LN}, some of them are conditional
on the generalized Riemann hypothesis (GRH). Even before these results, it was conjectured
in \cite{BGLM} that for groups $H$ of real rank at least $2$ the counting function $\mathrm{L}_H(x)$
grows like $x^{\log x/\log\log x}$.
In fact, a more precise conjecture was made in \cite{GLNP}, where it is suggested that
\begin{equation}
\lim_{x\to\infty} \frac{\log \mathrm{L}_H(x)}{(\log x)^2 / \log\log x} = \gamma(H), \text{ with }
\end{equation}
\begin{equation}\label{eq_11}
\gamma(H) = \frac{(\sqrt{R(R+1)}-R)^2}{4R^2},
\end{equation}
where $R = R(H) = \# \Phi_+/l$ is the ratio of the order of the set of positive roots of the (absolute) root system corresponding to $H$ and the rank of the root system. 

In \cite{BL2} we proved that in general the conjecture is false and the correct rate of growth
is $x^{\log x}$. The purpose of this paper is to show that the conjecture is still true
if one restricts to non-uniform lattices. In a general setting our results
depend on some widely believed number theoretic conjectures, but for ``most'' cases
they are proved unconditionally.

\medskip

For the main theorem of the paper, we assume that if $G$ is a split form of $H$, then the center of the simply connected cover of $G$ is a $2$-group and $G$ has no outer automorphism of order three.
This is the case for most $H$'s. In fact, this assumption says that $H$ is not of type $\En_6$ or $\Dn_4$, and if it is of type $\An_n$, then $n$ is of the form $n = 2^{\alpha}-1$ for some $\alpha\in\N$. We will call such simple Lie groups \emph{$2$-generic}.

\begin{theorem} \label{theorem}
For a $2$-generic simple Lie group $H$ of real rank at least $2$, we have
$$\lim_{x\to\infty} \frac{\log \rnu}{(\log x)^2 / \log\log x} = \gamma(H),$$
where $\gamma(H)$ is as in (\ref{eq_11}) and $\rnu$ is the number of conjugacy classes of non-uniform lattices in $H$ of covolume at most $x$.
\end{theorem}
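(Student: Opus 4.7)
The plan is to reduce the counting of $\rnu$ to a counting of congruence subgroups of a bounded list of maximal arithmetic lattices, and then to invoke the sharp asymptotics of congruence subgroup growth developed in \cite{Lu,GLP,LN}. What distinguishes the non-uniform case from the general case (refuted in \cite{BL2}) is that non-uniformity drastically restricts the arithmetic data---and in particular the number field~$k$.

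Let $\Gamma$ be a non-uniform lattice in $H$. By Margulis arithmeticity $\Gamma$ comes from a datum $(k,\G)$ as in the introduction, and by Godement's compactness criterion non-uniformity is equivalent to $\G$ being $k$-isotropic, so $\G(k_v)$ is noncompact at \emph{every} archimedean $v\in V_\infty$. Since $\Ker(\phi)$ is compact and $H$ is simple, the product $\prod_{v\in V_\infty}\G(k_v)^o$ can carry only one noncompact factor, which forces $k$ to have a unique archimedean place: $k=\Q$ when $H$ is real and $k$ imaginary quadratic when $H$ is complex. Combined with Borel--Prasad finiteness this leaves only finitely many commensurability classes of non-uniform lattices in $H$, and the number of maximal arithmetic subgroups of covolume at most $x$ within each class is negligible on our scale by \cite{B}. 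Invoking the congruence subgroup property, which holds for all non-uniform arithmetic lattices in higher rank (\cite[Ch.~9]{PR}), each such $\Gamma$ lies, up to a bounded finite index, inside some maximal arithmetic $\Gamma_0$ as a congruence subgroup. Hence, up to lower-order terms,
\[
\log \rnu \;=\; \max_{\Gamma_0}\, \log\#\{\text{congruence subgroups of }\Gamma_0\text{ of index}\leq x/\operatorname{vol}(H/\Gamma_0)\} + o\!\left(\frac{(\log x)^2}{\log\log x}\right).
\]
By strong approximation, the congruence subgroups of $\Gamma_0$ of level $I$ are parametrised by subgroups of $\G(\cO/I)$, and the local count at a prime of residue characteristic $q$ is governed by subgroup growth in the finite group of Lie type $\G(\F_q)$, which is $q^{(\gamma(H)+o(1))(\log q)^2}$ by Pyber--Shalev, with the constant matching \eqref{eq_11}. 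For the \emph{lower bound} one takes preimages of suitable (say elementary abelian unipotent) subgroups in $\G(\F_p)$ at many primes $p$ of norm $\approx \log x/\log\log x$; for the \emph{upper bound} one sums the local contributions over all possible levels of multiplicative weight at most $x$ and optimises, as in \cite{Lu,GLP,LN}. Both optimisations return the exponent $\gamma(H)$.

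The main obstacle is making the upper bound unconditional with the correct constant. The general congruence-subgroup-growth estimate of \cite{LN} invokes GRH through effective Chebotarev in order to guarantee enough primes with prescribed splitting in auxiliary Galois extensions that enter the local subgroup counts. The $2$-generic hypothesis excludes precisely the cases ($\En_6$, $\Dn_4$, and $\An_n$ with $n+1$ not a power of~$2$) in which odd-order phenomena---in the centre of the simply connected cover or in the outer automorphism group of the root system---obstruct the analysis; in the remaining cases those Galois extensions become $2$-extensions whose splitting behaviour is controlled unconditionally, so the sharp estimate of \cite{GLP,LN} applies without GRH. The remainder of the proof is a careful volume-versus-level bookkeeping within each of the finitely many commensurability classes identified at the start.
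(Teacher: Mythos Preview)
Your proposal has the right high-level shape---reduce to maximal lattices, invoke CSP, count congruence subgroups---but there are two genuine gaps.

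First, the claim that there are only finitely many commensurability classes of non-uniform lattices in $H$ is false. You correctly deduce that $k=\Q$ (real case) or $k$ is imaginary quadratic (complex case), but this does not pin down $\G$: for instance $H=\SL_8(\R)$ (which is $2$-generic, as $7=2^3-1$) admits the $\Q$-isotropic forms $\SL_4(D)$ for every quaternion $\Q$-algebra $D$ split at~$\infty$, and these yield pairwise non-commensurable non-uniform lattices; in the complex case there are in addition infinitely many imaginary quadratic~$k$. What the paper uses instead is that the number of \emph{maximal} non-uniform lattices of covolume $\le x$ is polynomial in $x$ (Theorem~\ref{thm:B}(ii)), and then the substantive part of the upper bound is proving that the \cite{LN} congruence-subgroup estimate holds \emph{uniformly} over this growing family of principal arithmetic groups. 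This requires a careful treatment of the ``bad primes'' (where $\G$ is not quasi-split or $\Pa_v$ is not hyperspecial) via Prasad's volume formula, showing that the covolume penalty at such primes compensates for the altered local subgroup count; your sketch does not address this.

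Second, and more importantly, you have misidentified where the $2$-generic hypothesis enters. It has nothing to do with avoiding GRH in the \cite{LN} bound: the upper bound there is already unconditional, and GRH is circumvented in the \emph{lower} bound simply by exhibiting one convenient lattice defined over $\Q$ or $\Q[i]$ (Section~\ref{sec:thm2 lower}), for which Bombieri--Vinogradov suffices---indeed the lower bound holds for all simple $H$, $2$-generic or not. The $2$-generic assumption is used in the upper bound to control the index $|Q|=[\Gamma:\Lambda']$ of a principal arithmetic subgroup in its normaliser (Proposition~\ref{cor:B}(ii)). This quotient is abelian with order governed by the class group of $k$ and of a related extension, and the needed bound $|Q|\le C^{\log x/\log\log x}$ comes from Gauss's theorem on the $2$-rank of class groups of quadratic fields (Corollary~\ref{cor:Gauss})---which is precisely why one needs the centre of the simply connected cover to be a $2$-group and no triality. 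Your phrase ``up to a bounded finite index'' papers over exactly this point: the index is not bounded, and controlling its growth is where the arithmetic hypothesis actually bites.
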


The theorem, combined with \cite{BL2}, says in particular that there are ``many more'' uniform lattices than non-uniform ones.  This is a characteristic zero phenomenon~--- in positive characteristic the growth of the two types are similar (at least modulo some well established conjectures and independently in ``most'' cases). We refer to \cite{SG} for more on the positive characteristic case. 

The proof of Theorem \ref{theorem} uses Gauss's Theorem which gives us a bound for the $2$-rank of the
class groups of quadratic extensions of $\Q$, the field of rational numbers. In order to be able to extend Theorem \ref{theorem} to all
simple groups $H$ we would need similar bounds for the $l$-ranks of the class groups of number fields for the primes $l \ge 2$. In fact,
we show that it is essentially equivalent to such bounds --- see Section~\ref{sec:equiv conj}.
Unfortunately, with the current knowledge in number theory, such bounds seem to be out of reach.
It is interesting to note that if one could prove Theorem~\ref{theorem} (or even a weaker form of it) by geometric 
methods this would have deep number theoretic consequences.
(See Section~\ref{section:NT} for a further discussion.)

\medskip

The proof of the theorem employs and develops the methods from \cite{B},
\cite{GLP} and \cite{LN} (see also \cite{GLNP}). In particular, we need to show that the error terms in the results that we use from these papers do not depend on the choice of the base lattice. 
We would like to point out that unlike some of the results in \cite{GLP} and \cite{LN}, Theorem~\ref{theorem} does not
depend on the GRH~--- the Generalized Riemann Hypothesis (although the extension to some
semisimple groups will still require such an assumption; see Section~\ref{section:semisimple}).

The lower bound of Theorem \ref{theorem} is proved in Section~\ref{sec:thm2 lower} by using the methods of \cite{GLP} and
\cite{LN}. We can avoid the assumption of the GRH imposed there by choosing suitable lattices for
which Bombieri--Vinogradov ``Riemann hypothesis on the average'' gives results which are as
good for us as the GRH (see Section~\ref{sec:thm2 lower} below).
These methods provide sufficiently many congruence subgroups (with the
appropriate rate of growth) in the chosen arithmetic lattices. We then use
the results of \cite[Section~5]{BL2} to ensure that not too many of them are
conjugate in $H$.

In Section~\ref{sec:thm2 upper} we prove the upper bound of Theorem~\ref{theorem}. This involves estimating the error term in the upper bound in \cite{LN}. Besides new difficulties with the ``subgroup growth'' methods, here one has to overcome the subtle issue with the $l$-rank of the class groups mentioned above. This upper bound is the main novel result of the paper.

In the final Section~\ref{section:semisimple} we discuss the extension of the main results to the non-uniform lattices in semisimple Lie groups and their relation to the number theoretic conjectures which were mentioned above.

\section{Notations and conventions}\label{section:notations}


Let $H$ be a semisimple Lie group without compact factors.
A subgroup $\Gamma$ of $H$ is called a {\em lattice}
if $\Gamma$ is discrete in $H$ and its covolume (with respect to some and hence
any Haar measure on $H$) is finite. A lattice is called {\em irreducible} if $\Gamma N$
is dense in $H$ for every non-compact closed normal subgroup $N$ of $H$.
A lattice is called {\em uniform} (resp.
{\em non-uniform}) if $H/\Gamma$ is compact (resp. non-compact).

Two subgroups $\Gamma_1$ and $\Gamma_2$ are called {\it commensurable} if
$\Gamma_1\cap\Gamma_2$ is of finite index in both of them. If $\Gamma$ is a
lattice in $H$, its {\it commensurability subgroup} (or {\it commensurator}) in
$H$ is defined as
\begin{equation*}
{\rm Comm}_H(\Gamma) = \{ g\in H \mid g^{-1}\Gamma g\ {\rm \ \and\ } \Gamma
{\rm\ are\ commensurable}\}.
\end{equation*}

For a finite group (resp. profinite group) $G$ we define its {\em rank} $\rk(G)$ as the supremum of the minimal number of generators over the subgroups (resp. open subgroups) of $G$. If $p$ is a prime and $G$ is a finite group, the \emph{$p$-rank of $G$} is defined by $\rk_p(G) = \rk(P)$, where $P$ is a Sylow $p$-subgroup of $G$.

\medskip

Along the lines we shall often come to arithmetic considerations, for which we
now fix some notations. Throughout this paper $k$ will always denote a number
field, $\cO = \cO_k$ is its ring of integers, $\D_k$ is the absolute value of the
discriminant $\Delta_k$ and $h_k$ is the class number of $k$. The set of valuations
(places) of $k$, $V = V(k)$, is the union of the set $V_\infty$ of archimedean
and the set $V_f$ of nonarchimedean (finite) places of $k$.
The number of archimedean places of $k$ is denoted by $a = a_k = \# V_\infty$,
and $r_1$, $r_2$ denote the number of real and complex places of $k$, respectively
(so $a = r_1 + r_2$ and $d = d_k = [k:\Q] = r_1 + 2r_2$).
Given a nonarchimedean place $v\in V_f$, the
completion of $k$ with respect to $v$ is a nonarchimedean local field $k_v$, its residue
field, which will be denoted by $\F_v$ or $\F_q$, is a finite field of cardinality
$q = q_v$. Finally, $\A = \A(k) = \prod_{v\in V}'k_v$ is the ring of ad\`eles of $k$,
where $\prod'$ denotes a restricted product.

\medskip

All logarithms in this paper will be taken in base $2$. For a real number $x$, $[x]$ denotes the largest integer $\le x$. The number of elements of a finite set $S$ will be denoted by $\# S$, while the order of a finite group $G$ will be denoted by $|G|$.

\medskip

Whenever it is not stated otherwise, the constants $c_1$, $c_2$, etc. depend only on the Lie group $H$.

\section{Number theoretic background}\label{section:NT}

Our problem of counting arithmetic lattices turns out to be intimately connected with estimating the rank of class groups. The reader is encouraged to take a look at Proposition~\ref{lemma_Cl_n} to get a first hint on the connection. So let us start here with some results and conjectures on class groups.

Let $h_k$ be the class number of the number field $k$, which is defined as the order of the class group $\mathrm{Cl}(k)$. Let $\mathrm{Cl}_m(k)$ denote the $m$-torsion subgroup of $\mathrm{Cl}(k)$, which consists of the elements of the class group whose orders divide $m$. We denote by $h_{k,m}$ the order of $\mathrm{Cl}_m(k)$, so $h_{k,l} = l^{\mathrm{rk}_l(\mathrm{Cl}(k))}$ when $l$ is a prime.

\begin{thm}\label{thm:Gauss}{\rm (Gauss, see \cite[Theorem 8, p. 247]{BS})}
Let $k$ be a quadratic number field with discriminant $\Delta_k$. Suppose that $\Delta_k$ has $t$ distinct prime divisors. Then
$$h_{k,2} = \left\{\begin{array}{ll}
2^{t-2} & \text{if $\Delta_k > 0$, and $p|\Delta_k$ for some prime $p\equiv 3\;(\mathrm{mod}\ 4)$};\\
2^{t-1} & \text{otherwise.}
\end{array} \right.$$
\end{thm}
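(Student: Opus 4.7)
The plan is to prove this classical result of Gauss via genus theory for the quadratic extension $k/\Q$. Write $G = \Gal(k/\Q) = \{1,\sigma\}$.

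First, I would identify the $2$-torsion $\mathrm{Cl}_2(k)$ with the group of ambiguous ideal classes $\mathrm{Cl}(k)^G$. The key point: for any fractional ideal $\mathfrak a$ of $\cO_k$, the product $\mathfrak a\cdot\sigma(\mathfrak a)$ equals the extension of the rational ideal $N_{k/\Q}(\mathfrak a)\Z$, and is therefore principal since $\mathrm{Cl}(\Q)=1$. Hence $\sigma[\mathfrak a]=[\mathfrak a]^{-1}$ in $\mathrm{Cl}(k)$, so the $\sigma$-fixed classes are exactly those of order dividing $2$.

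Next, I would invoke the classical ambiguous class number formula (Chevalley) for the cyclic extension $k/\Q$ of prime degree:
$$ |\mathrm{Cl}(k)^G| \;=\; h_\Q\,\frac{\prod_v e_v}{[k:\Q]\,[E_\Q : E_\Q\cap N_{k/\Q}(k^*)]}, $$
where $v$ ranges over all places of $\Q$, $e_v$ is the ramification index, and $E_\Q=\{\pm1\}$. Since $h_\Q=1$, $[k:\Q]=2$, and the $t$ finite ramified primes (exactly the primes dividing $\Delta_k$) each contribute $e_v=2$, the numerator equals $2^t$, with an extra factor of $2$ when the archimedean place ramifies, i.e.\ when $k$ is imaginary quadratic.

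The computation then splits according to whether $-1 \in N_{k/\Q}(k^*)$, which determines the unit index. For $k$ imaginary quadratic, $N(\alpha) = \alpha\bar\alpha \geq 0$ forces $-1$ not to be a norm, so the unit index is $2$; substitution yields $|\mathrm{Cl}(k)^G| = 2^{t+1}/(2\cdot 2) = 2^{t-1}$. For real quadratic $k=\Q(\sqrt d)$, I would apply Hasse's norm theorem: $-1 \in N_{k/\Q}(k^*)$ iff the Hilbert symbol $(-1,d)_v = 1$ at every place $v$. This is automatic at archimedean and unramified places, and at an odd ramified prime $p$ reduces via the standard formula to $(-1,d)_p = (-1/p)$, which equals $-1$ iff $p\equiv 3\pmod 4$. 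A direct analysis of $(-1,d)_2$ shows that once every odd ramified prime is $\equiv 1\pmod 4$, the local condition at $2$ holds automatically. Therefore $-1$ is a global norm iff no prime $p\equiv 3\pmod 4$ divides $\Delta_k$, and the formula then produces $2^{t-1}$ in the contrary case and $2^{t-2}$ when such a $p$ exists.

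The main technical obstacle is the fine computation of the unit index $[E_\Q:E_\Q\cap N_{k/\Q}(k^*)]$ via Hilbert symbols, in particular verifying that the prime $2$ contributes no independent obstruction beyond the odd primes $\equiv 3\pmod 4$ dividing $\Delta_k$; the identification of $\mathrm{Cl}_2(k)$ with $\mathrm{Cl}(k)^G$ and the invocation of the ambiguous class number formula are essentially formal.
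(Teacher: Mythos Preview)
The paper does not give its own proof of this theorem; it is quoted as a classical result with a reference to Borevich--Shafarevich, and the text immediately after remarks that ``Gauss proved his celebrated theorem using the genus theory which he developed.'' So there is no proof in the paper to compare your argument against.

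Your outline is a correct modern rendition of exactly that genus-theory argument. The identification $\mathrm{Cl}_2(k)=\mathrm{Cl}(k)^G$ via $\sigma[\mathfrak a]=[\mathfrak a]^{-1}$ is valid for any quadratic $k/\Q$, Chevalley's ambiguous class number formula then reduces the computation to the unit-norm index $[\{\pm1\}:\{\pm1\}\cap N_{k/\Q}(k^*)]$, and the Hasse norm theorem plus the local Hilbert-symbol computation at odd ramified primes handles the case split in the real quadratic case. One small point: your ``direct analysis of $(-1,d)_2$'' is most transparently carried out via the Hilbert product formula $\prod_v(-1,d)_v=1$, which immediately forces $(-1,d)_2=1$ once all other local symbols are trivial; it is worth saying this explicitly rather than leaving it as a separate local computation. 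With that, the argument is complete and matches what the paper only cites.
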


In fact, we do not need the exact formulation of Gauss's result but rather its corollary:
\begin{cor}\label{cor:Gauss}
For $k$ running over the quadratic number fields, we have
$$\rk_2(\mathrm{Cl}(k)) = O\left(\frac{\log\D_k}{\log\log\D_k}\right).$$
\end{cor}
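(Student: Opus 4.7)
The plan is to combine Gauss's theorem (Theorem~\ref{thm:Gauss}) with a standard prime-counting estimate. Gauss's formula gives $h_{k,2} \le 2^{t-1}$, where $t$ is the number of distinct prime divisors of $\Delta_k$. Since $h_{k,2} = 2^{\rk_2(\mathrm{Cl}(k))}$ (the $2$-torsion of a finite abelian group has order $2$ to the power of the $2$-rank), this gives immediately
$$\rk_2(\mathrm{Cl}(k)) \le t-1 < t.$$
Thus the corollary reduces to showing that $t = O(\log\D_k/\log\log\D_k)$ as $k$ ranges over quadratic number fields.

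Next, I would bound $t$ in terms of $\D_k$. Writing $p_1 < p_2 < \cdots < p_t$ for the distinct prime divisors of $\Delta_k$, we have the trivial lower bound
$$\D_k = |\Delta_k| \ge \prod_{i=1}^t p_i.$$
Since $p_i \ge i$ for all $i \ge 1$, Stirling's formula yields $\log \D_k \ge \log(t!) = (1+o(1))\,t\log t$. (A sharper estimate follows from Chebyshev's bound $\sum_{i=1}^t \log p_i = \theta(p_t) \sim p_t \sim t\log t$, but the prime number theorem is not needed here.) Rearranging gives $t = O(\log\D_k/\log\log\D_k)$, which combined with the bound from Gauss's theorem proves the corollary.

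There is no real obstacle: the corollary is a direct consequence of Gauss's theorem together with elementary analytic number theory. The substantive difficulty, flagged in the introduction, lies not here but in the fact that no analogous $l$-rank bound is available for primes $l \ge 3$; this is precisely why Theorem~\ref{theorem} is restricted to $2$-generic groups $H$, where only the $2$-rank of class groups of quadratic fields enters the argument.
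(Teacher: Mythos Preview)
Your proof is correct and follows essentially the same approach as the paper: Gauss's theorem bounds $\rk_2(\mathrm{Cl}(k))$ by the number $t$ of prime divisors of $\Delta_k$, and then $t = O(\log\D_k/\log\log\D_k)$. The only difference is cosmetic --- the paper cites the prime number theorem for the latter bound, whereas you give the more elementary $\prod p_i \ge t!$ argument, which is a perfectly good (and arguably cleaner) substitute.
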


\noindent
The corollary follows from the theorem by recalling the prime number theorem, which implies that the number of different prime divisors of $\D_k$ is $O(\log\D_k/\log\log\D_k)$.

Gauss proved his celebrated theorem using the genus theory which he developed. Later on the method was applied to prove a similar result for biquadratic extensions of $\Q$ (see \cite[Theorem~2]{Co}). We use these results in the proof of Theorem~\ref{theorem}; the main restriction on the type of $H$ which we impose there follows from the fact that in order for the genus theory to be applicable we need to assume that the center of the simply connected cover of the split form of $H$ is a $2$-group. In order to extend the theorem to other types we will require somewhat similar estimates for $\rk_l(\mathrm{Cl}(k))$ for $l>2$.

Apparently, even though related questions were extensively studied, very little is known about the $l$-ranks, for $l\neq 2$, of class groups of quadratic extensions and the ranks of class groups of extensions of higher degree.
We refer to \cite{EV} for some recent results and a review of the status of the problem. The ``folk'' conjecture which appeared in several sources states that for a fixed prime $l$ the $l$-part of the class group of number fields $k$ of a fixed degree grows slower than any power of $\D_k$ (see \cite[Section~1.2]{EV}). What we need is a stronger statement:

\addtocounter{prop}{1}
\renewcommand{\theconja}{\theprop.\Alph{conja}}

\begin{conja}\label{conj_h}
Fix an integer $d\ge 2$ and a prime $l$. Then for number fields $k$ of degree $d$, $\rk_l(\mathrm{Cl}(k)) =
o(\frac{\log\D_k}{\sqrt{\log\log\D_k}})$.
\end{conja}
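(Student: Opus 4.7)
The plan is to extend Gauss's genus-theoretic argument from Theorem~\ref{thm:Gauss}, which handles $l=d=2$, to arbitrary $(l,d)$. The motivation is that Gauss identifies $\mathrm{Cl}(k)[2]$ modulo the principal genus with a quotient of the $\F_2$-space spanned by ramified primes, of dimension $\omega(\D_k)$, and the prime number theorem then yields $\omega(\D_k) = O(\log\D_k/\log\log\D_k)$ --- already stronger than the target. One therefore needs to (i) isolate an analogous ``genus'' component in $\mathrm{Cl}(k)[l]$, and (ii) bound the residual principal-genus part.

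For (i), the first step is to replace $k$ by $k(\zeta_l)$: since $[k(\zeta_l):k]$ divides $l-1$ and is coprime to $l$, the inflation--restriction sequence reduces bounding $\rk_l(\mathrm{Cl}(k))$ to bounding $\rk_l(\mathrm{Cl}(k(\zeta_l)))$ up to a $(d,l)$-dependent constant. Over $k(\zeta_l)$, Kummer theory together with class field theory identifies $\mathrm{Cl}(k(\zeta_l))[l]$ with an $\F_l$-subspace of an $l$-Selmer group cut out by unramified local conditions at every finite place. The dimension of the ambient Selmer group is bounded by $\omega(\D_{k(\zeta_l)}) + r_1 + r_2$ plus a bounded term from places above $l$, all of which are $O(\log\D_k/\log\log\D_k)$, handling the genus-theoretic piece.

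The real obstacle is (ii). For $l=d=2$ Gauss's principal genus theorem (equivalently, the vanishing of a Tate cohomology group for the cyclic extension $k/\Q$) eliminates the principal-genus subgroup for free, but for $l>2$ or $d\ge 3$ the analogous $\hat H^0$ of units modulo norms from the $l$-Hilbert class field tower can be arbitrarily large, and no cohomological or analytic method is currently known to control it. The best available unconditional bounds --- Minkowski/Brauer--Siegel and the nontrivial estimates $h_{k,l}\ll_\varepsilon \D_k^{1/2-c(l,d)+\varepsilon}$ of Ellenberg--Venkatesh and their successors --- yield only $\rk_l(\mathrm{Cl}(k)) \ll \log\D_k$, which is precisely one factor of $\sqrt{\log\log\D_k}$ away from the target. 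A plausible conditional route is to assume GRH for the Hecke $L$-functions attached to characters of the $l$-class field tower and exploit Chebotarev-style density estimates to force the tower to terminate rapidly; absent such new input, however, settling the conjecture appears genuinely out of reach, which is why it is posed as such.
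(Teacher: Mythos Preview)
The statement is Conjecture~\ref{conj_h}; the paper offers no proof of it and says explicitly that ``with the current knowledge in number theory, such bounds seem to be out of reach.'' The conjecture appears only as a hypothesis (Remark~\ref{rem_43}, Theorem~\ref{thm_equiv_conj}(i)), so there is no argument in the paper to compare against. Your own text is consistent with this --- you end by conceding that the conjecture ``appears genuinely out of reach'' --- so what you have written is a discussion of the obstructions rather than a proof, and on that level your identification of the non-genus part of $\mathrm{Cl}(k)[l]$ as the essential difficulty agrees with the paper's view.

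One point in your step~(i) needs correction, though. You assert that Kummer theory places $\mathrm{Cl}(k(\zeta_l))[l]$ inside an $l$-Selmer group of $\F_l$-dimension at most $\omega(\D_{k(\zeta_l)})+r_1+r_2+O(1)$. Were that true, the conjecture would follow outright and your step~(ii) would be empty. In fact the Selmer group for $\mu_l$ over a field $K\ni\zeta_l$ with unramified local conditions everywhere has $\F_l$-dimension $\rk_l(\mathrm{Cl}(K))+\dim_{\F_l}(\cO_K^\times/\cO_K^{\times l})$, so the embedding is tautological and yields no bound. Genus theory controls only the quotient of $\mathrm{Cl}(K)[l]$ captured by the genus field over a fixed subfield; the complementary piece is exactly your step~(ii), and that is where the entire content of the conjecture lies.
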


\noindent For a future reference we will also formulate a slightly weaker conjecture:

\begin{conja}\label{conj_h'}
In the notations of Conjecture \ref{conj_h},
$\rk_l(\mathrm{Cl}(k)) = O(\frac{\log\D_k}{\sqrt{\log\log\D_k}})$.
\end{conja}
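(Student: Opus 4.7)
The plan is to emulate the proof of Corollary~\ref{cor:Gauss}, replacing Gauss's explicit genus theory by class field theory. Let me stress at the outset that the authors explicitly note this estimate is out of reach with current number-theoretic technology, so what follows is a strategy for attacking an open problem rather than a routine derivation.

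\emph{Step 1: reduction to the case $\zeta_l \in k$.} If $k$ does not contain a primitive $l$-th root of unity, pass to $K = k(\zeta_l)$, which has degree at most $d(l-1)$ over $\Q$ and, by the conductor--discriminant formula, satisfies $\log\D_K = O(\log \D_k)$ with implied constants depending only on $d$ and $l$. A Spiegelungsatz of Scholz--Leopoldt type relates $\rk_l(\mathrm{Cl}(k))$ to the $l$-rank of a specific $\chi$-eigenspace of $\mathrm{Cl}(K)$ under $\Gal(K/k)$, at the cost of an $O(1)$ error. Since both $\log \D_K / \log \D_k$ and $\log\log\D_K / \log\log\D_k$ are bounded in terms of $d$ and $l$, a bound of the desired shape for $K$ transfers to one for $k$.

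\emph{Step 2: the ambiguous $l$-rank via Chevalley.} Assuming $\zeta_l \in k$, write $k/\Q$ as a tower $\Q = k_0 \subset k_1 \subset \cdots \subset k_r = k$ with each step of prime-power degree, and apply Chevalley's ambiguous class number formula at each stage. This bounds the rank of the $\Gal(k_i/k_{i-1})$-invariants of $\mathrm{Cl}(k_i)[l^\infty]$ by the number of primes of $k_{i-1}$ ramifying in $k_i$, which is in turn at most (a bounded multiple of) the number of distinct rational prime divisors of $\D_k$. By the prime number theorem, that count is $O(\log\D_k / \log\log\D_k)$, which is even stronger than the conjectured bound, exactly as in the deduction of Corollary~\ref{cor:Gauss} from Theorem~\ref{thm:Gauss}.

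\emph{Step 3 — the main obstacle: passing from the invariant part to the full $l$-torsion.} For $d = l = 2$ the ambiguous part \emph{is} all of $\mathrm{Cl}(k)[l]$, which is precisely why Gauss's argument succeeds. For general $d$ and $l$, however, the natural map $\mathrm{Cl}(k)[l] \to \mathrm{Cl}(k)[l]^{\Gal}$ has a possibly large kernel, whose control amounts to understanding the higher layers of the $l$-class field tower of $k$ — essentially the Golod--Shafarevich / Fontaine--Mazur arena. The strongest unconditional result in this direction, due to Ellenberg--Venkatesh~\cite{EV}, gives $h_{k,l} \ll_\varepsilon \D_k^{1/2 - 1/(2l(d-1)) + \varepsilon}$, which merely shaves a constant fraction off the trivial Minkowski exponent $1/2$ and is nowhere near the $o(\log\D_k / \sqrt{\log\log\D_k})$ regime. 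Saving the additional $\sqrt{\log\log\D_k}$ factor would require a genuinely new ingredient — perhaps an effective form of the Cohen--Lenstra heuristics, or a cohomological method that sees the whole $l$-part of $\mathrm{Cl}(k)$ at once rather than only its Galois-fixed subgroup. This is precisely why the statement appears here as Conjecture~\ref{conj_h'} rather than as a theorem, and why the authors record in Section~\ref{section:NT} that a geometric proof of Theorem~\ref{theorem} for the remaining groups would have ``deep number theoretic consequences.''
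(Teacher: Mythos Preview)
The statement under review is Conjecture~\ref{conj_h'}, and the paper offers no proof of it: it is recorded precisely as an open conjecture, with the surrounding discussion in Section~\ref{section:NT} explaining that such bounds ``seem to be out of reach'' with current technology and that only the case $l=2$ for quadratic and biquadratic fields (via Gauss's genus theory, Theorem~\ref{thm:Gauss} and its biquadratic analogue) is known. You have correctly identified this, and your write-up is not a proof but an honest outline of a natural line of attack together with the place where it breaks down. In that sense there is nothing to compare against the paper's own argument, because there is none.

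Your Steps~1 and~2 are reasonable heuristics, and Step~3 correctly isolates the genuine obstruction: for $l>2$ (or for $l=2$ beyond the cases covered by genus theory) the ambiguous-class-number formula controls only the Galois-invariant part of the $l$-torsion, not the full $l$-rank, and bridging that gap is exactly what is not known. Your citation of \cite{EV} and the remark that even their bound is far from the conjectured shape match the paper's own assessment. One small caution: the reduction in Step~1 via a Spiegelungssatz is itself not entirely routine for general $d$ and $l$, and the claim that $\log\D_K = O(\log\D_k)$ with constants depending only on $d,l$ requires that the ramification in $k(\zeta_l)/k$ be controlled by primes dividing $l$, which is fine, but one should be careful that the eigenspace comparison does not lose more than $O(1)$ in rank. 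None of this changes the bottom line: the proposal is an accurate diagnosis of why Conjecture~\ref{conj_h'} remains open, consistent with the paper's treatment.
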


Let us note that a closely related statement to Conjecture~\ref{conj_h'} appears as a question in \cite[p.~96]{BrS}: Brumer and Silverman asked if there exists a constant $c_{l,d}$ depending only on $d$ and $l$ such that 
$\rk_l(\mathrm{Cl}(k)) \le c_{l,d} \frac{\log\D_k}{\log\log\D_k}$.

When we will prove Theorem~\ref{theorem} in Section~\ref{sec:thm2 upper}, we will use Corollary~\ref{cor:Gauss} of Gauss's theorem but a slightly weaker estimate as in Conjecture~\ref{conj_h} would suffice.
We will see in Section~\ref{sec:equiv conj} that proving Conjecture~\ref{conj_h} for quadratic and biquadratic fields would imply Theorem~\ref{theorem} for all simple Lie groups, and proving Theorem~\ref{theorem} for all simple groups would imply Conjecture~\ref{conj_h'}  for quadratic imaginary fields and some biquadratic fields. Proving it for all semisimple groups (with non-uniform lattices) would imply Conjecture~\ref{conj_h'} for all number fields $k$ and primes $l > 2$ (see Section~\ref{sec:semi_upper_thm2}). So, these results give some support for the conjectures.

The $l$-ranks of the class groups and, in particular, the $l$-ranks of the class groups of quadratic number fields are subject of the work of Cohen and Lenstra \cite{CL}. Heuristic assumptions introduced in \cite{CL} allowed them to make striking predictions about the average values of the ranks. Let us mention that although our conjectures refer to the same object, they neither imply nor follow from the Cohen--Lenstra heuristics ---  the latter deal with the averages while our conjectures refer to the extreme values of the ranks of the class groups.

\section{Arithmetic subgroups}\label{sec:ar}

Let $H$ be a semisimple connected linear Lie group without compact factors. It
is known that if $H$ contains irreducible lattices then all of its
simple factors are of the same type. Such groups $H$ are called {\it isotypic} or {\it
typewise homogeneous} (see~\cite[Chapter~9.4]{Ma}). So from now on we shall assume
that $H$ is isotypic. Moreover, without loss of generality we can further assume
that the center of $H$ is trivial. This implies that $H$ is isomorphic to ${\rm
Ad\:} H$, where ${\rm Ad}$ denotes, as usual, the adjoint representation. The
group ${\rm Ad\:} H$ is the connected component of identity of the $\R$-points
of a semisimple algebraic $\R$-group. There exist, therefore,
absolutely simple $\R$-groups $\G_i$, all of the same type, such that $H = (\prod_{i=
1}^{a}\G_i(\R) \times \G_{a+1}(\C)^{b})^o.$ A classical theorem of Borel
\cite{Bo1} (see also \cite{BH}) asserts that such $H$ does contain irreducible
lattices.

A word of warning: there are cases in which $H$ contains uniform irreducible
lattices but has no such a non-uniform lattice. If $H$ is simple non-compact
it has both uniform and non-uniform arithmetic lattices. We refer to \cite[Chapter~18.7]{WM}
for a discussion of this issue.

Let now $\G$ be an algebraic group defined over a number field $k$ which admits
an epimorphism $\phi:\G(k\otimes_\Q\R)^o \to H$ whose kernel is compact. In
this case, $\phi(\G(\cO))$ is an irreducible lattice in $H$.
Such lattices and the subgroups of $H$ which are
commensurable with them are called {\it arithmetic}. It can be shown that to
define all irreducible arithmetic lattices in $H$ it is enough to consider only
simply connected, absolutely almost simple $k$-groups $\G$ which have the same
(absolute) type as the almost simple factors of $H$ and are defined over the
fields with at most $b$ complex and at least $a$ real places. We shall
call such groups $\G$ and corresponding fields $k$ {\it admissible}.

Let us recall now a fact which is crucial for this paper and explains why the results here are so different from those in \cite{BL2}: If $\Gamma$ is a non-uniform arithmetic lattice in $H$, then its field of definition $k$ is of bounded degree over $\Q$ with a bound depending only on $H$. This follows from the well known result that the quotient space $H/\Gamma$ is non-compact if and only if $\Gamma$ contains non-trivial unipotent elements (see \cite[Chapter~5.3]{WM}) and hence non of the factors of $\G(k\otimes_\Q\R)$ is compact, which implies that the number of archimedean completions of $k$ is bounded by the number of simple factors of $H$ and so $[k:\Q]$ is bounded.

The arithmetic subgroups of the semisimple Lie group $H$ can be also described by the following construction which we are going to use throughout the paper.
Let $\Pa = (\Pa_v)_{v\in V_f}$ be a collection of parahoric subgroups $\Pa_v\subset\G(k_v)$
of a simply connected $k$-group $\G$. The family $\Pa$ is called {\it coherent} if
$\prod_{v\in V_\infty}\G(k_v)\cdot\prod_{v\in V_f} \Pa_v$ is an open subgroup of
the ad\`ele group $\G(\A_k)$.
Now let
\begin{equation*}
\Lambda = \Lambda(\Pa) = \G(k)\cap\prod_{v\in V_f} \Pa_v,
\end{equation*}
where $\Pa$ is a coherent collection. Following \cite{Pr}, we shall call $\Lambda$ the
{\em principal arithmetic subgroup} associated to $\Pa$. We shall also call $\Lambda' = \phi(\Lambda)$
a principal arithmetic subgroup of $H$.

Let now $\Gamma$ be a maximal arithmetic lattice in $H$. It is known that $\Gamma$ can be obtained as a normalizer in $H$ of the image $\Lambda'$ of some principal arithmetic subgroup $\Lambda$ of $\G(k)$ (see \cite[Proposition~1.4(iv)]{BP}). Moreover, such $\Lambda$ is a principal arithmetic subgroup of {\em maximal type} in a sense of Rohlfs (see \cite{Rohlfs} and also \cite{CR} for precise definitions). In order to prove the main theorem we will need certain control over the structure of $\Lambda$ and the index $[\Gamma:\Lambda']$ in terms of the covolume of $\Gamma$. For this purpose we recall some results from \cite{B}. As it is explained there, the group $\Gamma/\Lambda'$ is always abelian and the prime divisors of its order are contained in a finite set which depends only on $H$.

Let $\Gamma = N_H(\Lambda')$, where $\Lambda' = \phi(\Lambda)$, $\Lambda = \G(k)\cap\prod_{v\in V_f} \Pa_v$, be a maximal arithmetic lattice of covolume less than $x$, with $x$ large enough.

\begin{prop} \label{cor:B} \cite[Corollaries~6.1, 6.3]{B}
There exists a constant $C = C(H)$ such that for $Q = \Gamma/\Lambda'$ we have:
\begin{itemize}
\item[(i)] $|Q| \le x^{C}$;
\item[(ii)] If $\Gamma$ is non-uniform and $H$ is $2$-generic,
then $|Q| \le C^{\log x / \log\log x}$.
\end{itemize}
\end{prop}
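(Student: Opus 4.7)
The plan is to exploit the description of maximal arithmetic subgroups due to Borel--Prasad together with Rohlfs' analysis of the finite quotient $Q = \Gamma/\Lambda'$. Since $\Lambda$ is a principal arithmetic subgroup of maximal type, $Q$ is abelian and embeds in a finite group which fits in an exact sequence with three controlled pieces: a global cohomology term $H^1(k,Z)$ coming from the center $Z$ of $\G$; a bounded contribution from each finite place $v$ where $\Pa_v$ is not hyperspecial, sitting inside the symmetry group of the local (affine) Dynkin diagram; and an archimedean contribution that is bounded purely in terms of $H$. Via Poitou--Tate and the Dirichlet unit theorem, $|H^1(k,Z)|$ is bounded by the $n$-torsion class number $h_{\ell,n}$, possibly after passing to a small Galois extension $\ell/k$ over which $Z$ splits, multiplied by factors depending only on $d = [k:\Q]$, the number of archimedean places $a_k$, and $|Z|$.

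For part (i), I would combine this with Prasad's volume formula, whose dominant factor is $\D_k^{(\dim\G)/2}$, to get $\D_k\le x^{c_1}$ for some $c_1 = c_1(H)$. The crude Minkowski/Brauer--Siegel bound $h_k \ll \D_k^{1/2}(\log\D_k)^{d-1}$ then gives a polynomial-in-$x$ bound for the global piece, while the number of finite places at which $\Pa_v$ is not hyperspecial is known (again from Prasad's formula and [B]) to be $O(\log x/\log\log x)$, so the local piece is also polynomial in $x$. Multiplying the three pieces yields $|Q|\le x^{C}$.

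For part (ii), three refinements are needed. First, since $\Gamma$ is non-uniform, $\G$ is $k$-isotropic, and the observation recalled in Section~\ref{sec:ar} forces $d=[k:\Q]$ to be bounded by the number of almost simple factors of $H$. Second, the $2$-generic hypothesis ensures that $Z$ is a $2$-group, so $|H^1(k,Z)|$ is a $2$-group and is controlled by $h_{\ell,2} = 2^{\rk_2\mathrm{Cl}(\ell)}$, where $\ell/k$ is the splitting field of $Z$; the exclusion of $\En_6$ and $\Dn_4$ precisely rules out the cubic and triality twists that would otherwise force $\ell/k$ to be of degree~$3$, and the restriction to $\An_{2^\alpha-1}$ keeps $|Z|$ a power of $2$. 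Consequently $\ell$ is either $k$, a quadratic extension of $k$, or a biquadratic extension. Third, Corollary~\ref{cor:Gauss} of Gauss (together with its biquadratic analogue due to Conner cited in Section~\ref{section:NT}) gives $\rk_2\mathrm{Cl}(\ell) = O(\log\D_\ell/\log\log\D_\ell)$. Since $[\ell:\Q]$ is bounded, Prasad's volume formula gives $\log\D_\ell = O(\log x)$ and $\log\log\D_\ell = \Theta(\log\log x)$, which yields $|Q|\le C^{\log x/\log\log x}$. The local non-hyperspecial contribution, which a priori is $O(\log x/\log\log x)$ as in (i), combines with the global piece without changing the order of magnitude.

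The main obstacle, and the reason the $2$-generic hypothesis enters at all, is tracking exactly how the center $Z$ and its Galois twists contribute to $Q$, so that the class-group estimate can be reduced to a field $\ell$ of degree at most $4$ over $\Q$ where Gauss-type bounds are available. Outside the $2$-generic case, either $Z$ contains odd-order elements forcing us to control $\rk_l\mathrm{Cl}(\ell)$ for $l>2$ (where no Gauss-type theorem is known, cf.\ Conjecture~\ref{conj_h}) or the twist forces $\ell$ to be a cubic extension, again placing us outside the range of Theorem~\ref{thm:Gauss}. Thus the sharper bound (ii) is genuinely a $2$-adic statement, while (i) requires nothing more than the polynomial size of the full class group.
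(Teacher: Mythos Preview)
The paper does not prove this proposition; it is quoted from \cite[Corollaries~6.1 and~6.3]{B}, with Remark~\ref{rem_43} added to explain how Conjecture~\ref{conj_h} would lift the $2$-generic hypothesis. Your outline is structurally the argument of \cite{B}: the Rohlfs/Borel--Prasad description of $Q$ as an abelian group built from a global $H^1(k,Z)$ piece, local Dynkin-diagram symmetries at the non-hyperspecial places, and a bounded archimedean contribution; Prasad's volume formula to bound $\D_k$ (hence $\D_\ell$) polynomially in $x$; and, for~(ii), the reduction via $2$-genericity to the $2$-rank of $\mathrm{Cl}(\ell)$ with $[\ell:\Q]\le 4$, where Corollary~\ref{cor:Gauss} and the biquadratic analogue \cite{Co} apply. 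Your diagnosis of why the $2$-generic hypothesis is needed, and of what goes wrong outside that range, is exactly right.

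One step in part~(i), however, does not go through as written. You invoke $h_k \ll \D_k^{1/2}(\log\D_k)^{d-1}$ and assert this is polynomial in $x$. But part~(i) covers the uniform case as well, where $d=[k:\Q]$ is not bounded and can be of order $\log\D_k$; then $(\log\D_k)^{d-1}$ is of size $\D_k^{\,c\log\log\D_k}$, i.e.\ $x^{c'\log\log x}$, which is super-polynomial. The conclusion $|Q|\le x^C$ is nonetheless correct, but it needs a sharper input --- for instance a Rankin-type estimate on the ideal-counting function of $\ell$, which together with the Minkowski bound $d_\ell = O(\log\D_\ell)$ gives $h_\ell \le \D_\ell^{O(1)}$ with an absolute implied constant; or, closer to what \cite{B} actually does, a direct bound on the $|Z|$-torsion of $\mathrm{Cl}(\ell)$ and on the unit contribution, avoiding the full class number altogether. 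Part~(ii) is unaffected, since the non-uniform hypothesis keeps $d$ bounded and your stated class-number bound is then genuinely polynomial.
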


\begin{rem} \label{rem_43}
The groups that we excluded in Part~(ii) of the proposition are those for which the center of the simply connected form is not a $2$-group or $\Dn_4$, the last would require considering the class groups of cubic fields.
If we could prove Conjecture~\ref{conj_h}, then the argument from \cite{B} would allow us to prove a slightly weaker form of Proposition~\ref{cor:B}(ii) for non-uniform lattices in an arbitrary semisimple group $H$. More precisely, it would imply that $|Q| = C^{o\left(\log x/\sqrt{\log\log x}\right)}$ which is sufficient for our purpose.
\end{rem}

We will need a variant of the "level versus index" lemma where the level is controlled by the covolume of the lattice. To put it in a perspective, recall the classical lemma asserting that in $\Delta = \SL_2(\Z)$, every congruence subgroup of index $n$ contains $\Delta(m) = \Ker(\SL_2(\Z)\to\SL_2(\Z/m\Z))$ for some $m \le n$, i.e. the level $m$ is at most the index $n$. This was generalized in \cite{Lu} to the congruence subgroups of an arbitrary arithmetic group $\Delta$ by paying a price for $m$; i.e. it was shown that $m\le Cn$ for some constant $C$ which depends on the arithmetic group $\Delta$. Here we want to bound $C$ in terms of the covolume of $\Delta$.

Let us first introduce some notations. As before, let $\Lambda = \G(k)\cap\prod_{v\in V_f}\Pa_v$
where $k$ is a number field with the ring of integers $\cO$, $\G$ is a $k$-form of $H$ and $\Pa_v$ is a parahoric subgroup of $\G(k_v)$, and  let $\G_v$ be an $\cO_v$-scheme  with the
generic fiber isomorphic to $\G(k_v)$ such that $\G_v(\cO_v) = \Pa_v$. This
induces a congruence subgroup structure on $\Pa_v$ defined as follows:
\begin{equation*}
\Pa_v(r) = \Ker (\G_v(\cO_v) \to \G_v(\cO_v/\pi_v^r\cO_v)),
\end{equation*}
where $\pi_v$ is a uniformizer of $\cO_v$. These congruence subgroups induce a
congruence structure on $\Lambda$, $\Lambda(\pi_v^r) = \Pa_v(r)\cap\Lambda$. More
generally, for every ideal $I$ of $\cO$ look at its closure $\bar I$ in
$\hat\cO = \prod_v \cO_v$. Then $\bar I$ is equal to $\prod_{i=1}^{l}
\pi_{v_i}^{e_i}\hat\cO$ for some $Y = \{v_1, \ldots, v_l\}\subset V_f$ and
$e_1,\ldots ,e_l \in \N$. We then define the $I$-congruence subgroup of $\Lambda$,
\begin{equation*}
\Lambda(I) = \Lambda \cap (\prod_{i=1}^{l} \Pa_{v_i}(e_i)\cdot\prod_{v\not\in Y}\Pa_v).
\end{equation*}
In particular, for every $m\in\N$, the $m$-congruence subgroup $\Lambda(m) = \Lambda(m\cO)$ is
defined. Any subgroup of $\Lambda$ which contains $\Lambda(I)$ for some non-zero ideal $I$ is
called a {\em congruence subgroup}.

Let now $\Lambda$ be a principal arithmetic subgroup of a maximal type in $\G(k)$ and let $\Lambda'$ be its image in $H$. Assume also that
$\mu(H/\Lambda') \le x$, where $x\gg 0$. We have the following effective level versus index lemma proved in \cite{BL2}:
\begin{lemma}\label{level vs index lemma} \cite[Lemma~4.3]{BL2}
If $\Lambda_1$ is a congruence subgroup of $\Lambda$ of index $n$, then $\Lambda_1 \supseteq \Lambda(m\cO)$ where $m\in\N$ with $m\le x^C n$ and $C$ is a constant which depends only on $H$.
\end{lemma}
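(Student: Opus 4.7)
The plan is to refine the classical level-versus-index argument of \cite{Lu}, making its constant explicit in the arithmetic data of $\Lambda$, and then to bound that data polynomially in the covolume $x$.

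First, since $\Lambda_1$ is a congruence subgroup, it contains $\Lambda(J)$ for some nonzero ideal $J\subseteq\cO$; take $J$ to be the level, the largest (for divisibility) such ideal, and write $J = \prod_{v\in S} \pi_v^{e_v}$ with $S\subset V_f$ finite. Let $m$ be the positive generator of the ideal $J\cap\Z$; then $m\cO\subseteq J$ and hence $\Lambda(m\cO)\subseteq\Lambda(J)\subseteq\Lambda_1$. The injection $\Z/m\Z\hookrightarrow\cO/J$ forces $m\mid N(J)$, so $m\le N(J)$. It therefore suffices to prove $N(J)\le x^C n$.

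Second, pass to profinite closures in $\prod_{v\in V_f}\Pa_v$. Strong approximation for the simply connected group $\G$, which applies because $H$ is non-compact, implies that $\Lambda$ is dense in $\prod_v\Pa_v$, so
\[
[\Lambda:\Lambda(J)] = \prod_{v\in S}[\Pa_v:\Pa_v(e_v)].
\]
The core of the argument is a layer-by-layer analysis of the filtration $\Pa_v\supset\Pa_v(1)\supset\Pa_v(2)\supset\cdots$. At the deep layers ($e_v\ge 2$) the quotient $\Pa_v(e_v-1)/\Pa_v(e_v)$ is an $\F_v$-vector space of dimension $\dm\G$; minimality of $J$ forces the image of $\Lambda_1$ in this quotient to be proper and thus contributes at least a factor $p_v$ to $[\Lambda:\Lambda_1]$ for each such layer. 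The top layer $\Pa_v/\Pa_v(1)$, which at hyperspecial places equals $\G(\F_v)$, is handled separately using Lang's theorem and the quasi-simplicity of $\G(\F_v)$ modulo its center. Combining the local estimates over $v\in S$ yields an inequality $N(J)\le C_\Lambda\cdot n$ in which the constant $C_\Lambda$ depends on $\Lambda$ only through (a) the discriminant $\D_k$, (b) the set of places where $\Pa_v$ is non-hyperspecial or $\G$ is ramified, and (c) small residue characteristics.

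Finally, the hypothesis $\mu(H/\Lambda')\le x$ combined with Prasad's volume formula gives $\D_k\le x^{c_1}$ and bounds polynomially in $x$ both the cardinality of the bad-place set and the local correction factors at those places. Hence $C_\Lambda\le x^{c_2}$, which yields $m\le N(J)\le x^C n$ as required, with $C=C(H)$. The main obstacle is the bookkeeping at non-hyperspecial parahorics and at small $q_v$, where the naive local lower bound on $[\Pa_v:\Pa_v(e_v)]$ can degenerate; the assumption that $\Lambda$ is of maximal type restricts the possible parahoric types at such places, and together with Prasad's formula this allows the resulting corrections to be absorbed into the $x^C$ factor.
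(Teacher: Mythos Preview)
The paper does not prove this lemma; it is quoted from \cite[Lemma~4.3]{BL2} without argument, so there is no in-paper proof to compare against. Your outline follows the expected strategy of that reference---make the constant in the classical level-versus-index inequality of \cite{Lu} explicit in the arithmetic data of $\Lambda$, then bound that data by a power of the covolume via Prasad's volume formula---and the reduction $m\le N(J)$ together with your identification of the sources of the constant $C_\Lambda$ (discriminant, non-hyperspecial places, small residue characteristics) is correct.

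One step in your local estimate needs more than you give it. You assert that minimality of $J$ forces the image of $\Lambda_1$ to be proper ``for each such layer'', but minimality of $J$ directly controls only the \emph{bottom} layer $\Pa_v(e_v-1)/\Pa_v(e_v)$ at each place; by itself it says nothing about the intermediate quotients $\Pa_v(i-1)/\Pa_v(i)$ for $1<i<e_v$. What propagates properness through the whole filtration is the additional fact that, at good primes, the congruence filtration of $\Pa_v(1)$ agrees with its lower $p$-series (so $\Pa_v(i+1)$ is the Frattini subgroup of $\Pa_v(i)$): a full image at some layer $i$ would then force $\Pa_v(i)\subseteq M_v$ and hence $\Pa_v(e_v-1)\subseteq M_v$, contradicting minimality. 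Without this ingredient your sketch only yields $\prod_{v\in S}p_v\le n$, which is far too weak to control $N(J)$. The missing step is standard, but it is the heart of the local estimate and should be made explicit.
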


Note that for a general lattice $\Lambda'$ of $H$, obtained from a principal arithmetic subgroup $\Lambda$ as before, the index of $m\cO$ in $\cO$ (and hence also of $\Lambda(m\cO)$ in $\Lambda$) is not necessarily polynomial in $m$. It is bounded by $m^d$, where $d$ is the degree of the defining field $k$ of the arithmetic subgroup $\Lambda$. As it is explained in \cite{BL2}, in general the degree $d$ is bounded by $O(\log x)$, and hence the index of $\Lambda(m\cO)$ in $\Lambda$ is bounded by $(xn)^{c\log x}$.
A better result is probably true: $\Lambda_1 \supseteq \Lambda(I)$ for some $I\lhd\cO$ such that $[\Lambda:\Lambda(I)]\le (xn)^c$ with a constant $c$ depending only on $H$. This indeed follows from Lemma~\ref{level vs index lemma} if the degree of the field $k$ is bounded, which is always the case for non-uniform lattices:

\begin{cor} \label{level vs index}
In the notation above, if $\Lambda'$ is a principal non-uniform arithmetic lattice in $H$ of covolume at most $x$ and $\Lambda_1$ is a congruence subgroup of $\Lambda$ of index $n$, then $\Lambda_1 \supseteq \Lambda(m\cO)$ with $m\le x^{c_1} n$ and $[\Lambda:\Lambda(m\cO)]\le (xn)^{c_2}$, where the constants $c_1$, $c_2$ depend only on $H$.
\end{cor}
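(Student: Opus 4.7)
The plan is to deduce the corollary directly from Lemma~\ref{level vs index lemma}, exploiting the crucial feature of the non-uniform case emphasized in Section~\ref{sec:ar}: the field of definition $k$ of a non-uniform arithmetic lattice in $H$ has degree $d = [k:\Q]$ bounded by a constant $d_0 = d_0(H)$ depending only on $H$. The first assertion $m \le x^{c_1} n$ is exactly the content of Lemma~\ref{level vs index lemma} with $c_1 = C$, so nothing more is needed for it.

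For the second assertion I would bound $[\Lambda : \Lambda(m\cO)]$ via the natural injection
\[
\Lambda / \Lambda(m\cO) \hookrightarrow \prod_{v \in Y} \G_v(\cO_v / \pi_v^{e_v}\cO_v),
\]
where $m\cO = \prod_{v \in Y} \pi_v^{e_v} \cO_v$ locally. Since $\G_v$ is a smooth affine $\cO_v$-scheme of relative dimension $D := \dim \G$, the routine count $|\G_v(\cO_v/\pi_v^{e_v}\cO_v)| \le c_0 \, q_v^{De_v}$, together with $|\cO/m\cO| = \prod_v q_v^{e_v} = m^d$ and the trivial bound $\#Y \le \omega(m) \le \log m$, yields $[\Lambda : \Lambda(m\cO)] \le m^{c_3 d}$ for a constant $c_3 = c_3(H)$. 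Substituting $d \le d_0$ and the bound $m \le x^C n$ from Lemma~\ref{level vs index lemma}, we then get
\[
[\Lambda : \Lambda(m\cO)] \le (x^C n)^{c_3 d_0} \le (xn)^{c_2}
\]
with $c_2 = C c_3 d_0$ (after enlarging if $C < 1$), and both $c_1, c_2$ depend only on $H$, as required.

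The only substantive input is the bound $d \le d_0$, and this is not really an obstacle: it is precisely the observation recalled in Section~\ref{sec:ar} that non-uniformity of $\Gamma$ forces nontrivial unipotent elements, which in turn forces every archimedean factor of $\G(k\otimes_\Q\R)$ to be non-compact, so the number of archimedean places of $k$, and consequently $d$, are bounded by the number of simple factors of $H$. This is also why the general level-versus-index estimate, which for arbitrary arithmetic lattices degrades to $(xn)^{c \log x}$, recovers its clean polynomial form in the non-uniform case: the only source of the logarithmic loss was the potentially growing degree of $k$, which simply does not occur here.
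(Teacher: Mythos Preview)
Your proof is correct and follows exactly the approach indicated in the paper: apply Lemma~\ref{level vs index lemma} to obtain $m\le x^{C}n$, then use that $[\Lambda:\Lambda(m\cO)]$ is bounded by a fixed power of $m^d$ together with the bounded-degree fact for non-uniform lattices recalled in Section~\ref{sec:ar}. The only minor slip is the inequality $\#Y\le\omega(m)$, which should read $\#Y\le d\,\omega(m)$ since a rational prime may split into up to $d$ primes of $\cO$; this is harmless, as the extra factor of $d$ is absorbed into your constant $c_3$.
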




\section{Proof of the lower bound} \label{sec:thm2 lower}

The proof of the lower bound of Theorem \ref{theorem} depends on the lower bound of the following result:
\begin{thm}\label{thm:GLP} {\rm (\cite{GLP}, \cite{LN})}
Let $k$ be a number field, $\G$ an absolutely simple, connected, simply connected $k$-group with a fixed $k$-embedding $\G\subset\GL_s$ and let $\Gamma = \G(k)\cap\GL_s(\cO)$, where $\cO$ is the ring of integers of $k$. Denote by $c_n(\Gamma)$ the number of congruence subgroups of $\Gamma$ of index at most $n$.

Then, assuming the Generalized Riemann Hypothesis (GRH),
\begin{equation*}
\lim_{n\to\infty} \frac{\log c_n(\Gamma)}{(\log n)^2/\log\log n} = \gamma(\G),
\end{equation*}
where $\gamma(\G) = (\sqrt{R(R+1)}-R)^2 / 4R^2$ with $R = R(\G) = \# \Phi_+ /
r$, $\Phi_+$ is the set of positive roots of $\G$ and $r$ is its absolute rank.

The same result holds unconditionally if the quasi-split inner form of $\G$ splits over some abelian extension of $\Q$.
\end{thm}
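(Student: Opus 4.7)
The plan is to prove the asymptotic in two parts and, in both, to push all counting over to the finite quotients $\Gamma/\Gamma(m)$ via an effective level-versus-index reduction.

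\textbf{Lower bound.} The goal is to exhibit $n^{(\gamma(\G)+o(1))\log n/\log\log n}$ distinct congruence subgroups of $\Gamma$ of index $\le n$. The construction is of product type: pick $s$ rational primes $p_1<\cdots<p_s$ of size $\asymp\log n$ that split completely in $k$ and lie in the good-reduction locus of $\G$. For each $p_i$, fix a prime of $k$ above $p_i$ and use the reduction map $\Gamma\to\G(\F_{p_i})$; in the image choose many intermediate subgroups obtained by taking preimages of parabolics conjugated through the flag variety, which contributes $\asymp p_i^R$ distinct candidates of relative index $\asymp p_i^R$, where $R=\#\Phi_+/r$. Pulling back independently at each $p_i$ and multiplying, one gets $\prod_i p_i^R$ subgroups of total index $\prod_i p_i^R\le n$. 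Writing $\log n=s\cdot R\log p$ and maximizing the logarithm of the count subject to this constraint yields the quadratic optimum responsible for $\gamma(\G)=(\sqrt{R(R+1)}-R)^2/(4R^2)$. The supply of split primes of the required density in the window $[\log n,2\log n]$ is where analytic number theory enters: GRH via effective Chebotarev gives it in general, while under the abelian-splitting hypothesis on the quasi-split inner form one invokes Bombieri--Vinogradov, which handles splitting conditions defined by abelian field extensions of $\Q$ unconditionally.

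\textbf{Upper bound.} If $N\le\Gamma$ is a congruence subgroup with $[\Gamma:N]\le n$, the level-versus-index lemma gives $\Gamma(m)\subseteq N$ with $m\le Cn$. Hence
\[
c_n(\Gamma)\le\sum_{m\le Cn}\#\{\,K\le\Gamma/\Gamma(m):[\Gamma/\Gamma(m):K]\le n\,\}.
\]
By CRT the finite quotient decomposes as $\prod_{v\mid m}\G(\cO_v/\pi_v^{e_v})$, and one applies Larsen--Pyber-style bounds on the number of subgroups of $\G(\F_q)$ of index $\le q^k$. The inner count is then a constrained optimization: distribute $\log n$ across the prime factors of $m$, each of which contributes at most a bound of the form $q^{\alpha k^2}$ to the log-count. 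The dominant contribution comes from $m$ with $\log m\asymp\log n$ and $\log\log m\asymp\log\log n$, and the same quadratic extremum reappears to give $\gamma(\G)$ as the upper constant.

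\textbf{Main obstacle.} The hardest step is matching the constant in the upper bound. Lower bounds are handmade and only need \emph{one} good family, whereas the upper bound must rule out every source of subgroups, in particular non-parahoric intermediate subgroups of $\G(\F_q)$. This requires sharp counts of \emph{all} subgroups of $\G(\F_q)$ of index $\le q^k$, not merely the maximal ones, together with careful bookkeeping of how these counts aggregate across the prime factorization of $m$. The analytic input (GRH, or Bombieri--Vinogradov in the abelian-splitting case) is the second delicate ingredient, needed to ensure that in the optimal prime window enough primes split completely in $k$ to make the lower-bound construction meet the upper-bound estimate.
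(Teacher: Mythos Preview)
This theorem is not proved in the present paper; it is quoted from \cite{GLP} and \cite{LN}. The paper only recalls features of those proofs (Remark~\ref{rem_52} and the discussion in Steps~5--11 of Section~\ref{sec:thm2 upper}), so there is no in-paper proof to compare against directly. That said, your sketch can be measured against what the paper reports about the original arguments, and the lower bound has a genuine gap.

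Your lower-bound mechanism cannot produce enough subgroups. You propose, at each prime $p_i$, to take ``preimages of parabolics conjugated through the flag variety,'' yielding $\asymp p_i^R$ candidates of relative index $\asymp p_i^R$. Even granting this, the product over $s$ primes gives a total count of order $\prod_i p_i^R$, which is the same as the index bound $\le n$ --- so at most polynomially many subgroups, not $n^{\gamma\log n/\log\log n}$. The actual construction in \cite{GLP} (summarized in Remark~\ref{rem_52}) is quite different: one fixes a \emph{single} Borel $\mathrm{B}(p)\subset\G(\F_p)$ at each prime and then counts subgroups of the \emph{abelian} quotient $\prod_{p\in S}\mathrm{B}(p)/\mathrm{U}(p)$, of order $\prod_{p\in S}(p-1)^r$. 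The super-polynomial count arises by choosing the primes $p\in S$ so that a fixed auxiliary prime $\ell$ divides every $p-1$; the abelian quotient then contains $(\Z/\ell\Z)^{r|S|}$, which has $\ell^{\Theta((r|S|)^2)}$ subgroups. The optimization producing $\gamma(\G)$ balances $|S|$, the Borel index $\prod p^{\#\Phi_+}\le n$, and this \emph{quadratic} exponent in $r|S|$ --- not the linear trade-off you set up. The analytic input (GRH, or Bombieri--Vinogradov in the abelian case) is needed to guarantee enough primes that simultaneously split completely in $k$ \emph{and} lie in the residue class $1\pmod\ell$.

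Your upper-bound outline is closer in spirit but still omits the device that pins down the constant. In \cite{LN} the controlling invariant is $h(K)=\log[\G(\F_q):K]/\log|K^\diamond|$, with $K^\diamond$ the maximal abelian quotient of $K$ of order coprime to the characteristic; the main technical result is that $\liminf_{q}\min_K h(K)\ge R(\G)$, attained on Borel subgroups, and this is exactly what forces the matching constant $\gamma(\G)$. (A minor correction: the structural input is the Larsen--Pink theorem, not ``Larsen--Pyber.'')
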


\begin{rem} \label{rem_52}
For our purposes we need also the following observation concerning the
proof in \cite[Section~4]{GLP}: The proof of the lower bound provides the required
number of congruence subgroups of index at most $n$ in the preimage of $\mathrm{B}(P) =
\prod_{p\in S} \mathrm{B}(p)$ in $\Gamma = \G(\cO)$, where $S$ is a carefully chosen set
of rational primes which split completely in $\cO$, $\pi(p)$ is a maximal ideal
of $\cO$ which lies over $p$, $P = \prod_{p\in S}\pi(p)$ and $\mathrm{B}(p)$ is a
Borel subgroup of $\G(\cO/\pi(p)) = \G(\Z/p\Z)$ (see \cite[p.~87]{GLP} for details).
Moreover, $[\cO:P]$ is bounded polynomially by $n$ and so is the order of $\G(\cO/P)$.
As all these index $n$ subgroups contain the principal congruence subgroup
$\Gamma(P)$, we will be able to apply to them later Corollary~5.3 from \cite{BL2},
which implies that the number of such subgroups which are mutually conjugate in $H$
is bounded above by a polynomial of $n$.
\end{rem}

Another result which we shall use is:

\begin{prop}\label{thm:WM} {\rm (\cite[Proposition~6.1]{M1}, \cite[Proposition~3]{PR06})}
Let $\G_0$ be an absolutely simple, simply connected algebraic $\R$-group. Then
there exists a $\Q$-group $\G$ such that:
\begin{itemize}
\item[(1)] $\G \cong \G_0$ over $\R$;
\item[(2)] $\G$ splits over $\Q[i]$;
\item[(3)] $\G$ is quasi-split over $\Q_p$, for every odd prime $p$;
\item[(4)] $\Q$-{\rm rank}$(\G)$ = $\R$-{\rm rank}$(\G_0)$.
\end{itemize}
\end{prop}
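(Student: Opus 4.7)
The plan is to construct $\G$ by Galois-cohomological descent, prescribing its local structure at every place of $\Q$. First I would fix a quasi-split $\Q$-form $\widetilde\G$ of the same absolute type as $\G_0$ that splits over $\Q[i]$: for types all of whose $\R$-forms are inner, let $\widetilde\G$ be the $\Q$-Chevalley (split) form; for the outer types ($\An_n$ with $n\ge 2$, outer $\Dn_n$, $\En_6$), the outer Galois action on the Dynkin diagram coming from $\G_0$ is of order $2$, hence factors through $\Gal(\Q[i]/\Q)$ and determines a quasi-split $\Q$-form $\widetilde\G$ that splits over $\Q[i]$. (Triality, which would require a cubic extension, does not intervene because $\G_0$ is defined over $\R$.) This secures condition~(2) for $\widetilde\G$ and fixes the absolute type and outer structure.

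Since $\G_0$ is an inner twist of $\widetilde\G_\R$, the target $\G$ will arise as an inner $\Q$-twist of $\widetilde\G$, classified by a class $\xi\in H^1(\Q,\widetilde\G^{\mathrm{ad}})$. I would prescribe local components: at $v=\infty$ let $\xi_\infty$ be the class giving $\G_0$; at each odd prime $p$ let $\xi_p=1$, making $\G$ quasi-split over $\Q_p$; and leave $\xi_2$ free. The image of the localization map $H^1(\Q,\widetilde\G^{\mathrm{ad}})\to\prod_v H^1(\Q_v,\widetilde\G^{\mathrm{ad}})$ is controlled by the Kneser--Harder--Chernousov Hasse principle for the simply connected cover $\widetilde\G^{\mathrm{sc}}$ together with the connecting maps attached to $1\to Z\to\widetilde\G^{\mathrm{sc}}\to\widetilde\G^{\mathrm{ad}}\to 1$. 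The resulting obstruction lies in $\prod_v H^2(\Q_v,Z)$ and is governed by a single Poitou--Tate reciprocity relation. Since we retain full freedom at $\Q_2$, I can choose $\xi_2$ to cancel this obstruction, obtaining a global $\xi$ and hence $\G$. Condition~(1) then holds at $\infty$ by construction, condition~(3) at odd primes by triviality of $\xi_p$, and condition~(2) for $\G$ follows because $\widetilde\G_{\Q[i]}$ is split and $\xi$ can be arranged to die in $H^1(\Q[i],\widetilde\G^{\mathrm{ad}})$.

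The main obstacle is condition~(4), $\Q$-rank$(\G)=\R$-rank$(\G_0)$. The inequality $\Q$-rank$(\G)\le\R$-rank$(\G_0)$ is automatic from~(1). For the reverse, I would refine the construction to force the Tits index of $\G$ over $\Q$ to coincide with that of $\G_0$ over $\R$: fix a $\Q$-parabolic $\Pa\subseteq\widetilde\G$ of the cocharacter type corresponding to the $\R$-Tits index of $\G_0$, let $L$ be a Levi factor, and require $\xi$ to lie in the image of $H^1(\Q,L^{\mathrm{ad}})\to H^1(\Q,\widetilde\G^{\mathrm{ad}})$; then $\G$ inherits from $\widetilde\G$ a $\Q$-parabolic whose Levi is $\R$-anisotropic, yielding a $\Q$-split torus of the correct dimension. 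The principal difficulty lies in simultaneously satisfying all constraints: producing one global class $\xi$ that (i) matches $\G_0$ at $\R$, (ii) is trivial at all odd primes, (iii) lifts through $H^1(\Q,L^{\mathrm{ad}})$ to enforce the right $\Q$-rank, and (iv) verifies the global Poitou--Tate relation. Freedom at $\Q_2$ remains the crucial degree of freedom for reconciling these conditions. The explicit proofs in \cite{M1} and \cite{PR06} circumvent the abstraction by treating each Killing--Cartan type via a classical construction (Hermitian or skew-Hermitian forms with prescribed Witt index, division algebras with prescribed local invariants, etc.), and the plan above is a uniform Galois-cohomological repackaging of their case-by-case arguments.
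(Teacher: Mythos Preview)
The paper does not supply its own proof of this proposition; it is quoted verbatim from \cite{M1} and \cite{PR06} and used as a black box in Section~\ref{sec:thm2 lower}. There is therefore no in-paper argument to compare your proposal against.

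That said, your outline is broadly the cohomological strategy underlying \cite{PR06}: fix a quasi-split model $\widetilde\G$ splitting over $\Q[i]$, realise the desired form as an inner twist by prescribing local classes in $H^1(\Q_v,\widetilde\G^{\mathrm{ad}})$, absorb the global reciprocity obstruction at the prime $2$, and enforce the correct $\Q$-rank by demanding the cocycle come from a suitable Levi. One point that deserves more than the parenthetical you give it is condition~(2): you need the restriction of $\xi$ to $H^1(\Q[i],\widetilde\G^{\mathrm{ad}})$ to vanish, yet the prime $2$ is exactly where you have already spent your freedom killing the Poitou--Tate obstruction, and $\Q[i]$ has a unique ramified place above $2$. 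You must verify that the class $\xi_2$ you are forced to take restricts trivially to $\Q_2[i]$, or build that constraint in from the outset; as written, ``can be arranged'' is an assertion rather than an argument. You yourself flag the central difficulty---making all four conditions compatible simultaneously---and you are right that the cited sources resolve it by type-by-type classical constructions (hermitian forms, division algebras with prescribed local invariants, etc.) rather than by a single uniform cohomological stroke.
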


Let $H$ be a non-compact simple Lie group, so $H = {\mathrm H_1}(\R)^o$ for some simple algebraic group ${\mathrm H_1}$ defined over $\R$. If ${\mathrm H_1}$ is absolutely simple, we call the group $H$ {\em real}, and otherwise we call it {\em complex}. In the second case, up to isogeny, $H = {\mathrm H}_2(\C)$ and ${\mathrm H_1} = \mathrm{Res}_{\C/\R} {\mathrm H}_2$ for a complex group ${\mathrm H}_2$.

Assume first that $H$ is a real simple Lie group. Then (up to the center and connected component)
$H$ is equal to $\G(\R)$ for some $\Q$-group $\G$ as in Proposition~\ref{thm:WM}.
If $\G$ splits over $\Q$ then, since $H$ is a simple Lie group, $\G$ is absolutely
simple. If $\G$ does not split over $\Q$, it is a restriction of scalars from $l = \Q[i]$ to $\Q$ of
an absolutely simple split $l$-group $\G_1$. In either case we can apply
Theorem~\ref{thm:GLP} unconditionally of the GRH by taking $\Gamma = \G(\Z)$:
In the first case $\G$ is absolutely simple, and in the second case
$\G(\Z)=\G_1(\Z[i])$, where $\G_1$ is absolutely simple and $\Q[i]$ is an abelian
extension. We conclude that
\begin{equation*}
\liminf_{n\to\infty} \frac{\log c_n(\Gamma)}{(\log n)^2/\log\log n} \ge
\gamma(\G).
\end{equation*}
A crucial point for us is that $\gamma(\G)$ depends only on the absolute type
of $\G$ which is completely determined by $H$.

If $H$ is complex, then we can take $\G$ to be a split $\Q$-group such that $\G(\C) = H$. We automatically obtain $\Q$-{\rm rank}$(\G)$ = $\R$-{\rm rank}$(H)$. As $\G$ is absolutely simple, we can take $\Gamma = \G(\Z[i])$, which is a lattice in $\G(\C) = H$, and again Theorem \ref{thm:GLP} applies unconditionally to $\Gamma$.

To finish the proof of the lower bound of Theorem~\ref{theorem},
we now treat both cases together. The subgroup $\Gamma$ is a principal arithmetic subgroup which defines a lattice in $H$
which we denote by the same letter. This lattice is non-uniform since $\Q$-rank$(\G)$ = $\R$-rank$(H) > 0$,
as $H$ is non-compact (see \cite[Chapter~2]{WM}). Let $\nu_0 = \mu(H/\Gamma)$, so every
index $n$ subgroup of $\Gamma$ gives a lattice of covolume $x = n\nu_0$ in $H$.
Remark~\ref{rem_52} and \cite[Corollary~5.3]{BL2} are now combined to show that
among the index $n$ subgroups of $\Gamma$, which give the lower bound of
Theorem~\ref{thm:GLP}, only polynomially many (in $n$ or, equivalently, in $x$)
are in the same $\overline{\mathrm{G}}(k)$-conjugacy class, where $\overline{\mathrm{G}}$
is the adjoint form of $\G$ (if needed, one can
replace $\Gamma$ by a commensurable subgroup which satisfies the assumptions
of Proposition~5.2 from \cite{BL2}, i.e., all the $\Pa_v$ associated with it are maximal; we can then
apply Corollary~5.3 of \cite{BL2} for our case). By the discussion in \cite[Section~5]{BL2} it
follows that the same lower bound
applies also when we count the lattices up to $H$-conjugacy, and the lower bound of
Theorem \ref{theorem} is proven. \qed

\section{Proof of the upper bound} \label{sec:thm2 upper}

As in \cite{BL2}, the proof of the upper bound makes an essential use of two ingredients:
\begin{itemize}
\item[(a)] Counting maximal arithmetic lattices (\cite{B} which in turn is
greatly influenced by \cite{BP}); and
\item[(b)] Counting congruence subgroups of arithmetic groups (\cite{LN} and
\cite{GLNP}).
\end{itemize}
The main challenge here is that the bound in Theorem~\ref{theorem} requires finer counting than in \cite{BL2},
where the upper bound grows much faster.

Let us recall the main result of \cite{B} which we are going to use in this section (see also \cite[Theorem~1.6]{BGLS} for the semisimple groups of type $\An_1$):

\begin{thm} \label{thm:B}
Let $H$ be a semisimple Lie group of real rank $\ge2$ without compact factors. Denote by $\muu$ (resp. $\munu$)
the number of conjugacy classes of maximal irreducible uniform (resp. non-uniform) lattices in $H$ of covolume at most $x$. Then:
\begin{itemize}
\item[(i)] For every $\epsilon > 0$, there exists $c = c(\epsilon, H)$ such
that $\muu \le x^{c(\log x)^\epsilon}$ for every $x\gg 0$.
\item[(ii)] There exists a constant $c = c(H)$ such that $\munu \le x^c$ for
every $x\gg 0$.
\end{itemize}
\end{thm}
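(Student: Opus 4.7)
The plan is to follow the Borel--Prasad--Rohlfs framework as developed in \cite{BP} and \cite{B}. Any maximal irreducible arithmetic lattice $\Gamma$ in $H$ is of the form $\Gamma = N_H(\Lambda')$ where $\Lambda' = \phi(\Lambda)$, $\Lambda = \Lambda(\Pa) = \G(k)\cap\prod_{v\in V_f}\Pa_v$ is a principal arithmetic subgroup of maximal type for some admissible pair $(k,\G)$ and coherent collection $\Pa=(\Pa_v)$ of parahoric subgroups. The first step is therefore to parametrize conjugacy classes of maximal $\Gamma$ by the data $(k,\G,\Pa)$ (modulo appropriate equivalences), and to count such data subject to the covolume constraint $\mu(H/\Gamma)\le x$. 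Since the index $[\Gamma:\Lambda']$ is polynomially bounded in $x$ by Proposition~\ref{cor:B}(i), it suffices to count the principal arithmetic subgroups $\Lambda$ with $\mu(H/\Lambda')\le x^{C'}$ for a constant $C'=C'(H)$.

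The second step is to exploit the Prasad volume formula. Writing $\mu(H/\Lambda')$ as a product of an explicit discriminant factor $D_k^{a}D_{\ell}^{b}$ (with $\ell$ a splitting field of the quasi-split inner form of $\G$), Tamagawa numbers, and a product $\prod_v \lambda_v$ of local indices, the bound on the covolume yields $\D_k \le x^{c_1}$. Combined with Minkowski-type lower bounds on the discriminant, this forces the degree $d=[k:\Q]=O(\log x/\log\log x)$ in general. Crucially for (ii), as recalled in Section~\ref{sec:ar}, for non-uniform lattices $d$ is bounded by a constant $d_0=d_0(H)$ depending only on the number of simple factors of $H$.

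The third step is to count the ingredients of $(k,\G,\Pa)$ separately. The number of admissible fields $k$ of given degree $d$ with $\D_k\le x^{c_1}$ is bounded, by the Schmidt--Ellenberg--Venkatesh type estimates, by $x^{c_2 d}$, which is polynomial in $x$ in case (ii) and of size $x^{c(\log x)^\epsilon}$ in case (i). For each fixed $k$, the number of $k$-forms $\G$ of the given absolute type is finite and controlled by a Galois cohomology set whose order is polynomial in $\D_k$, hence polynomial in $x$. For the coherent collection $\Pa$, the volume formula implies that at all but $O(\log x/\log\log x)$ places $v$ the parahoric $\Pa_v$ must be hyperspecial, and at each remaining place the number of conjugacy classes of parahorics is bounded by an absolute constant; moreover the product of local indices $\prod \lambda_v$ at the non-hyperspecial places must not exceed $x^{c_3}$, which limits the choices to a polynomial in $x$ total. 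Multiplying the three contributions gives $\munu\le x^c$ in case (ii) and $\muu\le x^{c(\log x)^\epsilon}$ in case (i).

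The main obstacle is the counting of number fields of unbounded degree in case (i): the naive estimate $\D_k^{d}$ is far too weak, and one needs the sharper bound of the form $\D_k^{O(1)}\cdot d^{O(d)}$ (or the Ellenberg--Venkatesh refinement) to keep the total under $\exp(c(\log x)^{1+\epsilon})$. Once this input is in place, the remaining parts of the argument are essentially bookkeeping: tracking the local parahoric data via the volume formula and using the finiteness of $H^1(k,\G)$ at $S$-integral level to count the possible $k$-forms. For (ii) the degree bound $d\le d_0$ removes this obstacle entirely, and the argument yields the polynomial estimate directly.
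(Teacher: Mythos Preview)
The paper does not contain a proof of this theorem: it is simply quoted as the main result of \cite{B} (with the rank-one supplement from \cite{BGLS}), and is used as a black box in Step~2 of the upper-bound argument. So there is no ``paper's own proof'' to compare against.

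That said, your sketch is a faithful outline of the argument in \cite{B}. The parametrization of maximal lattices by the data $(k,\G,\Pa)$ via the Borel--Prasad--Rohlfs description, the use of Prasad's volume formula to bound $\D_k$ and hence $d=[k:\Q]$, and the separate counting of fields, $k$-forms, and coherent collections of parahorics is exactly the structure of that paper. You have also correctly identified the crux: in the uniform case the degree $d$ can grow like $c\log x$, and the naive field-counting bounds of Schmidt type are then too weak --- this is precisely why \cite{B} carries the Ellenberg--Venkatesh appendix, whose refined bound on the number of degree-$d$ fields with $\D_k\le X$ is what makes (i) go through with the exponent $c(\log x)^\epsilon$. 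In the non-uniform case (ii), as you note, the degree bound $d\le d_0(H)$ coming from the absence of compact archimedean factors removes this difficulty, and everything is polynomial.

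Two small points of care in your outline. First, the counting of $k$-forms $\G$ is not quite a matter of a single Galois cohomology set being ``polynomial in $\D_k$'': one must also control the splitting field $\ell$ of the quasi-split inner form (its degree and discriminant enter the volume formula), and then the local invariants classifying inner forms; this is done in \cite{B} but takes some work. Second, your appeal to Proposition~\ref{cor:B}(i) to pass from $\Gamma$ to $\Lambda'$ is fine within this paper, but note that that proposition is itself quoted from \cite{B}, so in a self-contained argument the index bound $[\Gamma:\Lambda']\le x^C$ has to be established alongside the rest.
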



We shall count the lattices of covolume at most $x$ by first counting the maximal ones (the number of which is small by Theorem~\ref{thm:B}),
and then counting finite index subgroups within such maximal lattices. We will divide the proof into several steps:

%

\begin{step}
Since the result of the main theorem does not depend on the normalization of the Haar measure $\mu$ on $H$, for the sake of convenience we will fix $\mu$ so that $\mu(H/\Gamma) \ge 1$ for every lattice $\Gamma$. This is possible by Wang's theorem mentioned in the introduction (in fact, by an earlier result of Kazhdan--Margulis, giving a positive lower bound for the covolumes of lattices in $H$ --- see \cite[Chapter XI]{Rag}).

The proof of the upper bound in Theorem \ref{theorem} starts in a similar way to the proof of the upper bound in \cite{BL2} and we shall use the same notations as there. There is still a crucial difference: if $\Gamma$ is a non-uniform arithmetic lattice in $H$ then, as explained in Section~\ref{sec:ar}, its field of definition $k$ is of bounded degree over $\Q$. This is the reason why we get a lower rate of growth. This is so for every semisimple group $H$. If $H$ is simple as we assume here, we even have $[k:\Q]\le 2$.
\end{step}

\begin{step}
In analogous to the counting function $\ru$ considered in \cite[Section~7]{BL2}, we have
\begin{equation}\label{eq_71}
\rnu \le \munu \cdot \sup_{\substack{\Gamma \\ \mu(H/\Gamma)\le x}} \!\!\!\! s_x(\Gamma),
\end{equation}
where $\Gamma$ now runs over the maximal non-uniform lattices in $H$ (the number of which grows polynomially by Theorem~\ref{thm:B}(ii)) and $s_x(\Gamma)$ denotes the number of subgroups of $\Gamma$ of index at most $x$. So our main goal in the rest of the proof is to bound the second term of the right hand side of inequality \eqref{eq_71}. 
\end{step}

\begin{step}
Let $\Gamma = N_H(\Lambda')$, $\Lambda' = \phi(\Lambda)$, where $\Lambda = \G(\cO)$ and $\cO$ is the ring of integers in a number field $k$ (see Section~\ref{sec:ar}). We endow $\Lambda'$ with the congruence structure induced from $\Lambda$. As $\Lambda'$ is non-uniform, $\Lambda$ satisfies the congruence subgroup property, so we need to count only congruence subgroups. To be precise, one should say that $\Lambda$ satisfies the weak congruence subgroup property, i.e. $\mathrm{C}(\Lambda) = \mathrm{Ker}(\widehat{\G(\cO)} \to \G(\widehat{\cO}))$ is finite. So, if this kernel is non-trivial there are also some non-congruence subgroups. But we also know from the precise calculation of this congruence kernel in \cite{PR96} that if $[k:\Q]$ is bounded, then $|\mathrm{C}(\Lambda)|$ is bounded. This implies that every finite index subgroup $\Delta < \Lambda$ is of bounded index in its congruence closure $\overline{\Delta}$, and the map $\Delta \to \overline{\Delta}$ has bounded fibers. Hence for the sake of proving Theorem~\ref{theorem}, it suffices to count only the congruence subgroups, as we will do in the remaining part of this section. 
\end{step}

\begin{step}
Using Corollary~\ref{level vs index} (``level versus index'') we can estimate $s_x(\Lambda')$ from above:
$$ s_x(\Lambda') \le \sum_{m=1}^{[x^{c_1+1}]} s_x(\Lambda/\Lambda(m)).$$
\end{step}

\begin{step}
Now, we recall some results on counting congruence subgroups from \cite{GLP} and \cite{LN}. Consider the exact sequence
\begin{equation}\label{eq_72}
1\to N \to \Lambda/\Lambda(m) \to \Lambda/\Lambda(\Pi_{i=1}^{t}p_i) \to 1,
\end{equation}
where $p_1$, \dots, $p_t$ are the different prime divisors of $m$.

By \cite[Corollary~5.2]{GLP},
\begin{equation}\label{eq_73}
s_x(\Lambda/\Lambda(m)) \le m^{c\log\log m} \cdot s_x(\Lambda/\Lambda(\Pi_{i=1}^{t}p_i)),
\end{equation}
where $c$ is a constant which does not depend on $\Lambda$ but only on $H$. We remark that in \cite{GLP} the group $\G$ was assumed to be split, but this assumption was not really needed for the proof of Corollary~5.2 which is based on Lemma~5.1 there, and the only critical issue there is that $N$ is solvable (in fact, even nilpotent) group. 

Thus, the crucial factor to bound is $s_x(\Lambda/\Lambda(\prod_{i=1}^{t}p_i))$. 
\end{step}

\begin{step}
From this point onward we can follow the proof of the upper bound of Theorem~2(A) in~\cite{LN}
showing that $s_x(\Lambda) \le x^{(\gamma(H)+l(x))\log x / \log\log x}$, where $l(x) = o(1)$.
The proof of Theorem~2(A) there is given for a fixed $k$ and a fixed arithmetic group $\G(\cO)$. 
\end{step}

\begin{step}
\emph{We claim that when $k$ is a number field of bounded degree and $\G(\cO)$ runs over principal arithmetic lattices in a fixed group $H$, the result is still valid with the same $l(x)$ for all these lattices. }

The proof in \cite{LN} is long and quite complicated, so we will not repeat it here. Let us only outline the main strategy, elaborating on one point which needs a careful discussion to ensure that the error term does not depend on $\G(\cO)$.
 
The proof on pages 110--122 of \cite{LN} applies the Larsen--Pink theorem and a detailed analysis of the structure of parabolic subgroups of finite simple groups to deduce that the main contribution to the subgroup counting of $\Lambda/\Lambda(\Pi_{i=1}^{t}p_i) = \Pi_{i=1}^{t} \G(q_i)$ comes from the abelian sections $\Pi_{i=1}^{t} \mathrm{B}(q_i)/\mathrm{U}(q_i)$, where $\mathrm{B}$ is a Borel subgroup and $\mathrm{U}$ is its unipotent radical. In fact, we can even concentrate on the cases where $\G(q_i)$ splits. In this case $\mathrm{B}(q_i)$ is of index $q_i^{\#\Phi_+}$ and the abelian group $\mathrm{B}(q_i)/\mathrm{U}(q_i)$ is of order $(q_i-1)^{\mathrm{rank}(\G)}$. (This is the reason why $R(\G) = \frac{\#\Phi_+}{\mathrm{rank}(\G)}$ plays such an important role and is the basis for the computation of $\gamma(\G)$.)

There is however one point which requires a careful discussion when adapting the proof in \cite{LN} to our ``uniform'' result: In \cite{LN}, one works with one group $\G(\cO)$ at a time, so it was possible to ignore finitely many ``bad primes'' which have no importance for the result there but we should check that they do not affect the uniform upper bound. Recall that to each nonarchimedean place $v \in V_f(k)$ we have an associated parahoric subgroup $\Pa_v \subset \G(k_v)$ (see Section~\ref{sec:ar}). Similar to \cite[Section~4.4]{B}, there are three types of bad primes which correspond to the places $v \in V_f(k)$ that satisfy one of the following conditions:
\begin{itemize}
\item[(i)] the parahoric subgroup $\Pa_v$ is not hyperspecial and $\G$ splits over the maximal unramified extension $\hat{k}_v$ of $k_v$; 
\item[(ii)] the group $\G$ is not quasi-split over $k_v$ and splits over $\hat{k}_v$; 
\item[(iii)] the quasi-split inner form of $\G$ is not split over $\hat{k}_v$. 
\end{itemize}
By \cite[Section~4.4]{B}, the total number of bad primes is bounded by $\log(x)$ but this bound is not sufficient to ensure, by a naive counting argument, that these primes are insignificant and a more careful analysis is needed. 
\end{step}

Let us first consider an illustrative example: 

\begin{example}\label{exmpl}
Let $\Da$ be a $\Q$-defined quaternion algebra and $\cO$ an order in $\Da$. Then $\Gamma = \SL_3(\cO)$ is an arithmetic lattice in $\SL_6(\R)$ (for a non commutative ring $\cO$, we define $\SL_3(\cO)$ as the set of the invertible matrices in $\mathrm{M}_3(\cO)$ with the reduced norm $1$, cf. \cite[Section~2.3.1]{PR}). 
If $\Da$ splits over $\Q$, then $\Da = \mathrm{M}_2(\Q)$ and $\cO$ is, say, $\mathrm{M}_2(\Z)$ and we get $\Gamma = \SL_6(\Z)$ which has no bad primes. But in the general case there is a finite set of primes $p$ for which $\Da$ ramifies. For these primes $\Da(\Q_p)$ is a division algebra with residue degree $2$ and ramification index $2$ over $\Q_p$,
and one deduces that $\Da(\F_p)$ is also a division algebra, but as every division algebra over a finite field is a field, one sees that $\Da(\F_p) = \F_{p^2}$. It follows that the semisimple part of $\G(p)$ in this case is $\SL_3(p^2)$ instead of $\SL_6(p)$ which is the case when $p$ splits. This could cause a difficulty (note that $\gamma(\SL_3) > \gamma(\SL_6)$!). But fortunately this is not the case: a careful analysis of Prasad's volume formula shows that such bad primes increase the covolume of the given lattice, and hence decreases its contribution to the lattice growth.
\end{example}

\begin{step}
We now come back to the general case. Recall the function $h(K)$ which was defined in \cite[p.~108]{LN} by $h(K) = \frac{\log[\G(\F_q):K]}{\log |K^\diamond|},$ where $K$ is a subgroup of $\G(\F_q)$ and $K^\diamond$ denotes the maximal abelian quotient of $K$ whose order is coprime to $p = \mathrm{char(\F_q)}$. The main point of the proof of the upper bound in \cite{LN} is that the larger $h(K)$ is, the smaller is the contribution of $K$ to the subgroup growth, and the main technical result there is Theorem~4 (p.~109) asserting that 
$$\liminf_{q\to\infty} \min\{h(K) \mid K \le \G(\F_q)\} \ge R(\G).$$ 
We need to ensure that the same holds true if we allow the group $\G$ to vary, when the contribution to the lattice growth should be with respect to the covolume, so let us modify the definition of $h(K)$ to be
$$ h_\G(K) := \frac{\log([G':K] \lambda(q))}{\log |K^\diamond|}.$$
Here the group $G' = \G'(\F_{q'})$ is determined by the structure of $\G$ over the corresponding place of $k$. It is equal to $\G(\F_q)$ for the good primes and for the bad primes is defined depending on the type (see Steps~9--11 below). The factor $\lambda(q)$ compensates for the covolume increase contributed by the bad primes: for almost all places $v\in V_f(k)$ the group $\G$ is quasi-split over $k_v$ and $\Pa_v$ is a hyperspecial parahoric subgroup --- for all these places we have $\lambda(q) = 1$; now for the bad primes the $\lambda$-factor is determined by the ratio of the volume of a hyperspecial parahoric subgroup in a quasi-spit group of the same type as $\G$ over $k_v$ and the volume of $\Pa_v$ with respect to the Haar measure from \cite[Sections 1.3, 2.1]{Pr}.
It follows from Prasad's formula that for the principal arithmetic lattices the factors are given by $\lambda(q) = |\G_0(\F_q)|/|G'|$, where $\G_0$ is a simply connected quasisimple group of the same Lie type as $\G$ defined over the finite field $\F_q$.

We have to consider $h_\G(K)$ for the three types of bad primes defined above. As in \cite{LN} we will restrict to the places of $k$ with the residue characteristic $>3$. Since the degree of $k$ is bounded this assumption will not affect the growth function. 
\end{step}

\begin{step}
\emph{Type} (i): The group $\G$ splits over the maximal unramified extension $\hat{k}_v$ of $k_v$ and $\Pa_v$ is not a hyperspecial parahoric subgroup. Then assuming as in Section~\ref{sec:ar} that $\Pa_v = \G_v(\cO_v)$, we have $\G_0 = \G$, $G' = \overline{\G}_v(\F_q) \le \G(\F_q)$ and $\lambda(q) = [\G(\F_q):G']$, and therefore
\begin{equation}\label{eq_typei}
h_\G(K) = \frac{\log([G':K] \cdot [\G(\F_q):G'] )}{\log |K^\diamond|} = \frac{\log[\G(\F_q):K]}{\log |K^\diamond|} = h(K).
\end{equation}
Hence the problem reduces to the two other types of primes. 
\end{step}

\begin{step}
\emph{Type} (ii): Here the group $\G$ is not quasi-split over $k_v$ and splits over an unramified extension of $k_v$. We have $\G'(\F_{q'}) = \G(\F_{q})$, where $\F_{q}$ is the residue field of $k_v$ (in our example $q = p$ and $q' = p^2$). In the notation of \cite[Section~2.2]{Pr} (see also \cite[Section~4.3]{BL2}) the $\lambda$-factor is given by 
$$ \lambda(q) = \frac{q^{(\dim\overline{\mathrm{M}}_v + \dim\overline{\mathcal{M}}_v)/2}}{\# \overline{\mathrm{M}}_v(\F_q)} \cdot \frac{\# \overline{\mathcal{M}}_v(\F_q)}{q^{\dim\overline{\mathcal{M}}_v}}.$$
The reductive group $\overline{\mathcal{M}}_v$ associated to the quasi-split inner form of $\G$ is in fact absolutely quasi-simple (see \cite[Section 2.5]{Pr}), so its order over $\F_q$ can be obtained from the table of orders of finite groups of Lie type given in \cite[Table~1]{Ono}. 

In our Example~\ref{exmpl} we have $\overline{\mathcal{M}}_v = \SL_6$ and the group $\overline{\mathrm{M}}_v$ can be identified using the Bruhat--Tits theory; it is the product of the $1$-dimensional norm-$1$ torus $\mathrm{R}^{(1)}_{\F_{p^2}/\F_p}(\GL_1)$ and the semisimple group $\mathrm{R}_{\F_{p^2}/\F_p}(\SL_3)$ whose maximal $\F_p$-torus has dimension $4$ over $\F_p$ (we refer the reader to \cite{Tits} for a comprehensive survey of the Bruhat--Tits theory of reductive groups over nonarchimedean local fields).

It follows that the second factor in the product for $\lambda(q)$ is less than $1$ and bounded below by $c_1 = c_1(l) > 0$, where $l$ is the absolute rank of $\G$ (indeed, it tends to $1$ when $q \to \infty$). Moreover, we have $\# \overline{\mathrm{M}}_v(\F_q) \le (q+1)^{\dim \overline{\mathrm{M}}_v}$ (see \cite[Section~2.6 and Lemma~2.8]{Pr}). Therefore
$$ \lambda(q) \ge \frac{q^{(\dim\overline{\mathrm{M}}_v + \dim\overline{\mathcal{M}}_v)/2}}{ (q+1)^{\dim\overline{\mathrm{M}}_v}} \cdot c_1 \ge c_2 q^{(\dim\overline{\mathcal{M}}_v - \dim\overline{\mathrm{M}}_v)/2}.$$
Now, $\dim\overline{\mathcal{M}}_v = l + 2\#\Phi_+$ and $\dim\overline{\mathrm{M}}_v = l + 2\#\Phi'_+$, where $\Phi'_+$ is the set of positive roots of $\G'$ (note that the dimensions of the maximal $\F_q$-tori in $\overline{\mathcal{M}}_v$ and $\overline{\mathrm{M}}_v$ are equal). Hence we obtain $ \lambda(q) \ge c_2 q^{\#\Phi_+ - \#\Phi'_+}$, and
$$ h_\G(K) \ge \frac{\log([G':K] c_2 q^{\#\Phi_+ - \#\Phi'_+})}{\log |K^\diamond|}.$$
As in type (i), we can choose a suitable parahoric subgroup and with a computation similar to \eqref{eq_typei} reduce the problem to the case $G' = \G(\F_{q})$. Now a small modification of the proof of Proposition~3 in \cite{LN} implies that for large enough $q$ the function $h_\G(K)$ attains its minimum on the Borel subgroup of $G'$, for which we have $[G':K] \sim  q^{\#\Phi'_+}$ and $|K^\diamond| \sim q^l$. It follows that 
$$ \liminf_{q\to \infty} \min \{h_\G(K)\} \ge  \frac{\#\Phi_+}{l} = R(G).$$
It is important to note here that this bound is uniform in the lattice $\Lambda$, i.e. the rate of convergence depends only on the Lie type of $\G$. This indeed follows from the proofs of Propositions~2 and 3 in \cite{LN}.

\end{step}

\begin{step}
\emph{Type} (iii): Finally consider the case when $\G$ is not split over $\hat{k}_v$. Again as in type~(i) a computation similar to \eqref{eq_typei} allows us to choose a suitable parahoric subgroup
and assume $G' = \G'(\F_{q'})$, moreover, here $q' = q$ because the extension over which the quasi-split inner form of $\G$ splits is ramified over $k_v$.
Now Prasad's formula implies that for this type of primes we have $\lambda(q) \ge q^{s/2}$ with 
the constant $s = s(\G)$ defined in \cite[Section~0.4]{Pr} (cf. \cite[Section~4.4]{B}). From this one deduces:
\begin{align*}
h_G(K) \ge \frac{\log[G':K]}{\log |K^\diamond|} + \frac{(s/2)\log q}{\log |K^\diamond|};\\
\liminf_{q\to \infty} \min \{h_\G(K)\} \ge R(\G') + \frac{s}{2l'},
\end{align*}
where $l' = \mathrm{rank}(\G')$. 
Indeed, the second inequality follows immediately from \cite[Theorem~4]{LN} and the fact that $|K^\diamond| \lesssim q^{l'}$, 
and as in type~(ii) this bound is uniform in the lattice $\Lambda$.

If the quasi-split inner form of $\G$ does not split over an unramified extension of $k_v$ then $\G$ is an outer form over $k$. The types which admit outer forms together with the corresponding invariants are listed in Table~\ref{tbl2}. For the remaining types $\Bn_l$, $\Cn_l$, $\Gn$, $\Fn$, $\En_7$, $\En_8$ we have $R = l$, $l$, $3$, $6$, $9$, $15$, respectively. The table values for $s(\G)$ are provided by \cite[Section~0.4]{Pr}, the constant $R(\G)$ is easily computed from its definition, and the information about the type of $\G'$ follows from \cite[Section~4]{Tits}. We can now check that in all the cases we have $R(\G') + s/(2l') \ge R(\G)$. 

\renewcommand{\arraystretch}{1.2}
\begin{table}[ht] 
$$
\begin{array}{|l|l|l|l|}
\hline
\text{Type of }\G & s(\G) & R(\G) & \text{Type of }\G' \\
\hline
^2\An_l,\ l - \text{odd} & \frac12(l-1)(l+2) & \frac12(l+1) & \Cn_{\frac{l+1}2} \\
^2\An_l,\ l - \text{even} & \frac12l(l+3) & \frac12(l+1) & \Bn_{\frac{l}2} \\
^2\Dn_l  & 2l-1 & l-1 & \Bn_{l-1} \\
^3\Dn_4,\ ^6\Dn_4 & 7 & 3 & \Gn \\
^2\En_6 & 26 & 6 & \Fn \\
\hline
\end{array}
$$
\caption{}\label{tbl2}
\end{table}

With this modification at hand the rest of the proof in \cite{LN} indeed gives us the desired uniform upper bound. 

\end{step}

\begin{step}
We now move from $\Lambda'$ to $\Gamma$ of Step~3. To this end note that $\Lambda'(m)\lhd\Gamma$ and, again using \cite[Corollary~5.2]{GLP}, what we really need to count is $s_x(\Gamma/\Lambda'(m))$ where $m$ is a product of prime ideals in $k$.  So fix $m$ and let
$$N = \Lambda'/\Lambda'(m) \lhd L = \Gamma/\Lambda'(m), \quad Q = L/N.$$
By Corollary~\ref{level vs index} we can assume that $m \le x^{c_1+1}$ with $c_1 = c_1(H)$, and as the degree of the field $k$ is bounded it follows that $|N|$ is polynomially bounded in $m$ and hence in $x$.
By Proposition~\ref{cor:B}, $Q = \Gamma/\Lambda'$ is a finite group of order bounded by $c_2^{\log x /\log\log x}$ for a constant $c_2 = c_2(H)$. Together with the previous remark, it implies that $|L|$ is polynomially bounded in $x$.
Moreover, a prime $l$ can divide the order of $Q$ only if $l$ divides the order of the center of
the simply connected cover of the split form of $H$
or if it divides the order of the automorphism group of the local Dynkin diagram of $\G$, where $\G$ is an admissible group as in Section~\ref{sec:ar} (this can be deduced from \cite[Proposition~2.9]{BP} which is essentially due to Rohlfs \cite{Rohlfs}).
So only finitely many primes $l_1, \ldots , l_t$ can appear as the divisors (for a given $H$).

If $B$ is a subgroup of $L$ of index at most $x$, then $B\cap N$ is of index at most $x$ in $N$. As we already counted $s_x(N)$, we can assume that $B\cap N$ is given and estimate the number of possibilities for $B$.
For such a group $B$, $B/(B\cap N) \cong BN/N \le Q$, so the prime divisors of $|B/(B\cap N)|$ are among $\{l_1,\ldots, l_t\}$. It follows that $B$ is generated by $B\cap N$ and subgroups $B_1,\ldots,B_t$, where each $B_i$ is an $l_i$-Sylow subgroup of $B$. Each subgroup $B_i$ is contained in some $P_i$, where $P_i$ is an $l_i$-Sylow subgroup of $L$. So, in conclusion, such $B$ is generated by $B\cap N$ and by $\{P_1\cap B,\ldots,P_t\cap B\}$ and we assume that $B\cap N$ is fixed. For a fixed $i \in \{1, \ldots, t\}$, the number of $l_i$-Sylows of $L$ is at most $|L|$, which is polynomial in $x$, so the total number of such $t$-tuples $\{P_1,\ldots,P_t\}$ is polynomial in $x$ (as $t$ is bounded, depending only on $H$). Given such a $t$-tuple $\{P_1,\ldots,P_t\}$, $B$ as above is determined by $\{B\cap P_i \mid i = 1, \ldots, t\}$. Now, each $B\cap P_i$ contains
$B\cap N\cap P_i = B\cap P_i \cap N$, i.e., the number of possibilities for $B\cap P_i$ is bounded by the number of subgroups of
$P_i/(P_i\cap N) \cong P_iN/N$. The latter group $P_iN/N$ is a subgroup of $Q$, whose order is at most $c_3^{\log x /\log\log x}$. Hence the total number of subgroups of $Q$, and in particular those of the form $P_iN/N$, is bounded by $c_4^{(\log x /\log\log x)^2}$ (as the group of size $n$ has at most $n^{\log_2 n}$ subgroups, cf. \cite[Lemma~1.2.2]{LS}). So, in summary,
%
%
%
%
it follows that the total number of possible subgroups $B$ is bounded by \linebreak $s_x(\Lambda) x^{c\log x/(\log\log x)^2}$ with the constant $c$ depending only on $H$. The second factor is asymptotically smaller than $s_x(\Lambda)$, and we conclude that
$$
\limsup_{x\to\infty} \frac{\log \rnu}{(\log x)^2 / \log\log x} =
\limsup_{x\to\infty} \frac{\log s_x(\Lambda)}{(\log x)^2 / \log\log x}
\le \gamma(H).
$$
This finishes the proof of Theorem \ref{theorem}. \qed
\end{step}

\section{The GRH, rank of class groups and counting lattices}
\label{section:semisimple}\label{sec:semi_lower_thm2}\label{sec:semi_upper_thm2}\label{sec:equiv conj}

In this section we are going to discuss how to extend Theorem~\ref{theorem} to other simple and also semisimple Lie groups and its relation to some number theoretic conjectures.

We begin with the lower bound. First let us note that the lower bound in Theorem~\ref{theorem} is true unconditionally for every (non-compact) simple Lie group, including the non $2$-generic ones. Let us discuss its extension to semisimple Lie groups. Given such a group $H$, it is natural to consider only irreducible lattices in $H$, so from now on, $\mathrm{L}_H(x)$ denotes the number of conjugacy classes of \emph{irreducible} lattices in $H$ of covolume at most $x$. The same refers to $\ru$, $\rnu$ and other notations from Section~1. We recall (see Section~\ref{sec:ar}) that $H$ contains irreducible lattices only if it is isotypic, and that we can assume that $H = (\prod_{j=1}^{a}\G_j(\R) \times \G_{a+1}(\C)^{b})^o$ for some absolutely simple $\R$-groups $\G_j$, $j = 1, \dots, a+1$. Recall also that in contrast with the case of the uniform lattices, the isotypic condition is not sufficient for $H$ having a non-uniform irreducible lattice (see \cite[Example~18.7.7]{WM}).

In the proof of the lower bound of Theorem \ref{theorem} we showed that if $H$ is a high
rank simple Lie group then
\begin{equation*}
\liminf_{x\to\infty} \frac{\log \rnu}{(\log x)^2 / \log\log x} \ge
\gamma(H).
\end{equation*}
We did so by presenting an arithmetic lattice $\Gamma$ in $H$ which is defined
over $k$, $[k:\Q] \le 2$ and appealing to~\cite{GLP} where a lower bound is
given on the number of congruence subgroups of such a lattice $\Gamma$. The
results in~\cite{GLP}, in the most general form, rely on the generalized Riemann hypothesis (GRH).
To prove the theorem for simple Lie groups the GRH is not needed since $[k:\Q]\le 2$ and the Bombieri--Vinogradov Riemann
hypothesis on the average suffices --- see \cite{GLP}. The same is true if
\begin{itemize}
\item[$(7.1)$] \textit{$k$ is a number field contained in a Galois field $E$ over $\Q$,
such that $\Gal(E/\Q)$ has an abelian subgroup of index $4$}
\end{itemize}
(see \cite{GLP}, \cite{LN}).

Unfortunately, not every semisimple group $H$ admits a non-uniform irreducible
lattice defined over a field $k$ which satisfies $(7.1)$ (even if $H$ admits
non-uniform irreducible lattices). As we now show, $H =\SL_3(\R)^3\times\SL_3(\C)^2$
provides an example of such a group.

First note that if $k$ satisfies $(7.1)$ then $\Gal(E/\Q)$ is solvable. To define
a non-uniform lattice in $H = \SL_3(\R)^3\times\SL_3(\C)^2$ we need a number
field $k$ with $r_1 = 3$ real and $r_2 = 2$ complex places, so $[k:\Q] = r_1 +
2r_2 = 7$ is an odd prime. The following lemma, which is due to Peter M\"uller
and was communicated to us by Moshe Jarden, implies that condition $(7.1)$ never
holds for such fields $k$:
\begin{lemma}\label{lemma_81}
If $r_1 + 2r_2 = l$ is an odd prime and $1<r_1<l$, then there is no number field $k$ with
\begin{itemize}
\item[(1)] exactly $r_1$ real and $r_2$ complex places;
\item[(2)] solvable $\Gal(E/\Q)$, where $E$ is the Galois closure of $k$
over $\Q$.
\end{itemize}
\end{lemma}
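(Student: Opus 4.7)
The plan is to exploit the classical fact that solvable transitive permutation groups of prime degree are very restricted, forcing the number of real places to equal $1$.

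First I would set $G = \mathrm{Gal}(E/\Q)$ and $H = \mathrm{Gal}(E/k)$, so that $G$ acts faithfully and transitively on $\Omega := G/H$, a set of size $l = [k:\Q]$. Under the usual correspondence, the embeddings $k \hookrightarrow \C$ are in bijection with $\Omega$, and the action of a complex conjugation $c \in G$ on $\Omega$ has its set of fixed points equal to the set of real embeddings. Since $r_1 < l$, at least one place is complex, so $c$ is a non-trivial involution in $G$.

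Next I would invoke the classical theorem (going back to Galois; see e.g.\ Huppert, \textit{Endliche Gruppen I}, II.3.6) that every solvable transitive permutation group of prime degree $l$ is primitive and embeds, as a permutation group, into the affine group $\mathrm{AGL}_1(\F_l) = \F_l \rtimes \F_l^\times$ acting naturally on $\F_l$. Applying this to $G \subseteq \mathrm{Sym}(\Omega)$ (permissible because $G$ is solvable by hypothesis~(2)), we may identify $\Omega$ with $\F_l$ and view the involution $c$ as an affine map $x \mapsto ax+b$ with $a^2 = 1$. Since $l$ is odd and $c \ne 1$, we must have $a = -1$, and such a map has the single fixed point $x = b/(1-a) = b/2 \in \F_l$.

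Thus $c$ has exactly one fixed point on $\Omega$, giving $r_1 = 1$ and contradicting the assumption $r_1 > 1$. The argument has essentially no obstacles once the classical structure theorem for solvable transitive groups of prime degree is in hand; the only thing to double-check is the standard translation between real embeddings of $k$ and fixed points of complex conjugation acting on the coset space $G/H$, which is a routine application of Galois theory for the extension $E/\Q$ containing $k$.
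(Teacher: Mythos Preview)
Your proof is correct and follows essentially the same route as the paper's: both identify the Galois action on a set of size $l$ (you use cosets $G/H$, the paper uses the roots of a primitive polynomial for $k$), invoke the classical structure theorem that a solvable transitive group of prime degree embeds in $\mathrm{AGL}_1(\F_l)$, and then observe that a nontrivial involution in such a group (here, complex conjugation) has exactly one fixed point, forcing $r_1 = 1$. The only cosmetic difference is that the paper phrases the fixed-point count via the minimal normal subgroup $N$ of order $l$ (noting $c \notin N$ since $l$ is odd), whereas you compute directly with the affine map $x \mapsto -x + b$.
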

\begin{proof} Assume $k$ is a field of degree $l$ which satisfies conditions
(1) and (2). Choose a primitive element $x$ for $k/\Q$ and let $f = {\rm irr}
(x, \Q)$ be its irreducible polynomial over $\Q$ (so $k = \Q[x]/(f(x))$). Let
$X$ denote the set of roots of $f$ in $E$, so by the assumption $\# X = l$ and
$\#(X\cap\R) = r_1$. As $1<r_1<l$, there is at least one real root and not all
roots are real. The group $G = \Gal(E/\Q)$ acts faithfully and transitively on
$X$. Since $\# X = l$ is a prime, $G$ is primitive group, and it is solvable by the assumption.
Let $N$ be a minimal normal subgroup of $G$. 
Then $|N| = l$, and G is isomorphic as a permutation group to a subgroup of the affine general linear group ${\rm AGL} (1, \F_l)$ acting on $\F_l$ (see \cite[Lemma~21.7.2]{FJ}).
Thus every element
of $G\smallsetminus N$ fixes exactly one element of $X$. In particular, if $\tau\in
G$ is an involution, then $\tau\not\in N$ (since $|N| = l$ and $l>2$). So
$\tau$ fixes exactly one element of $X$. This applies to all real involutions
in $G$, hence $\#(X\cap\R) = 1$ and so $r_1 = 1$, which is a contradiction.
\end{proof}

It follows that in order to extend our proof of the lower bound in Theorem \ref{theorem} to semisimple groups $H$ we need to assume the GRH.
This is the only obstacle, if we assume the GRH, we can carry on the proof of the lower bound in Section~\ref{sec:thm2 lower}, without any change, for every semisimple Lie group $H$, provided $H$ has a non-uniform irreducible lattice. In summary, we have:

\begin{thm}\label{thm:semisimple1}
Let $H$ be a semisimple Lie group of real rank at least $2$ which contains a non-uniform irreducible lattice. Then, assuming the GRH, we have
$$\liminf_{x\to\infty} \frac{\log \rnu}{(\log x)^2 / \log\log x} \ge \gamma(H),$$
where $\gamma(H)$ is defined by (\ref{eq_11}) applied to (any) of the simple factors of $H$.
\end{thm}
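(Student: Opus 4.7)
The plan is to repeat, essentially verbatim, the argument given in Section~\ref{sec:thm2 lower} for the lower bound of Theorem~\ref{theorem}, but now allowing ourselves to invoke Theorem~\ref{thm:GLP} in its full (GRH-conditional) form. The reason the previous proof went through unconditionally was that for a simple Lie group $H$ of real rank at least $2$ we could always find an admissible field $k$ with $[k:\Q]\le 2$, so that either $\G$ was absolutely simple over $\Q$ or $\G = \mathrm{Res}_{\Q[i]/\Q}\G_1$ with $\G_1$ split over the abelian extension $\Q[i]$, and Bombieri--Vinogradov replaced the GRH. In the semisimple setting Lemma~\ref{lemma_81} shows that no such $k$ need exist (e.g.\ $H = \SL_3(\R)^3\times\SL_3(\C)^2$ forces $[k:\Q] = 7$ with non-solvable Galois closure), so we simply keep the GRH as a hypothesis and use Theorem~\ref{thm:GLP} directly.

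First, I would use the assumption that $H$ contains a non-uniform irreducible lattice to produce an admissible $k$-group $\G$: by Section~\ref{sec:ar} there exist a number field $k$ and an absolutely simple, simply connected $k$-group $\G$ of the same absolute type as the simple factors of $H$, with an epimorphism $\phi:\G(k\otimes_\Q\R)^o\to H$ of compact kernel, and such that the principal arithmetic subgroup $\Gamma := \phi(\G(\cO_k))$ is non-uniform in $H$. Since the quotient $H/\Gamma$ is non-compact, no archimedean factor of $\G(k\otimes_\Q\R)$ is compact, so the number of archimedean places of $k$, and hence $[k:\Q]$, is bounded above by the number of simple factors of $H$. Fix a $k$-embedding $\G\hookrightarrow\GL_s$ and work with $\Gamma\cap\GL_s(\cO_k)$, which is commensurable with $\Gamma$.

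Next, applying Theorem~\ref{thm:GLP} (under GRH) to this $\Gamma$ yields
\begin{equation*}
\liminf_{n\to\infty}\frac{\log c_n(\Gamma)}{(\log n)^2/\log\log n}\ge \gamma(\G).
\end{equation*}
Since $H$ is isotypic, all of its simple factors share the same absolute root system, and $\gamma(\G)$ depends only on this absolute type; hence $\gamma(\G)=\gamma(H)$ in the sense of (\ref{eq_11}). Let $\nu_0=\mu(H/\Gamma)$, so that each congruence subgroup of $\Gamma$ of index at most $n$ gives a non-uniform lattice in $H$ of covolume at most $x=n\nu_0$.

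Finally, to pass from counting subgroups of $\Gamma$ to counting $H$-conjugacy classes of lattices, I would appeal to Remark~\ref{rem_52}: the subgroups producing the lower bound in Theorem~\ref{thm:GLP} all contain a common principal congruence subgroup $\Gamma(P)$ with $[\G(\cO):\G(\cO/P)]$ polynomial in $n$. By \cite[Corollary~5.3]{BL2} (after replacing $\Gamma$ by a commensurable subgroup whose associated parahorics are all maximal, as in \cite[Proposition~5.2]{BL2}), at most polynomially many such subgroups are mutually conjugate in $H$, so that the same lower bound survives when counting $H$-conjugacy classes of non-uniform lattices, giving $\rnu\ge x^{(\gamma(H)-o(1))\log x/\log\log x}$ as desired. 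The main obstacle, handled above by assuming the GRH, is the absence of a field $k$ of low degree or of the abelian-by-finite form described in (7.1); apart from this the proof is purely a transcription of the argument in Section~\ref{sec:thm2 lower}.
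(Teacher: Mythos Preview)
Your proposal is correct and follows essentially the same approach as the paper: the paper's proof amounts to the single sentence that, assuming the GRH, the argument of Section~\ref{sec:thm2 lower} goes through unchanged for any semisimple $H$ admitting a non-uniform irreducible lattice, and you have faithfully spelled out the details of that transcription (choosing the admissible $k$-group $\G$ from the given non-uniform lattice rather than via Proposition~\ref{thm:WM}, invoking Theorem~\ref{thm:GLP} in its GRH-conditional form, and then repeating the conjugacy-class argument via Remark~\ref{rem_52} and \cite[Corollary~5.3]{BL2}).
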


Let us move to the upper bound. Here there is an issue to discuss even for the case of general (i.e. not $2$-generic) simple Lie group.
The assumption imposed on the type of $H$ in Theorem~\ref{theorem} to be $2$-generic is used only for the bound of the order of the quotient group $Q$, which is provided by Proposition~\ref{cor:B}(ii) whose proof in \cite{B} uses Gauss's Theorem~\ref{thm:Gauss}. By Remark \ref{rem_43}, this argument would work for every simple Lie group, and even for general semisimple groups, if we would know Conjecture \ref{conj_h}.
We will now show that a partial converse is also true, namely, we will prove that if Theorem~\ref{theorem} holds for semisimple Lie groups of the form $\SL_n(\R)^{r_1}\times\SL_n(\C)^{r_2}$, then Conjecture~\ref{conj_h'} is true for all number fields $k$ with $r_1$ real and $r_2$ complex places and any odd prime $l$.

We first establish a supplementary result:
\begin{prop}\label{lemma_Cl_n}
Let $k$ be a number field, $H = \SL_n(k\otimes_\Q \R)$, $\Lambda = \SL_n(\cO)$ and $Q = N_H(\Lambda)/\Lambda$.
 If $n$ is odd then there exists an epimorphism
 $$\psi:Q \to \mathrm{Cl}_n(k).$$
\end{prop}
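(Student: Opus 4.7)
The plan is to define $\psi$ via the determinant of a lift of $g \in N_H(\Lambda)$ to $\GL_n(k)$, and to use the hypothesis that $n$ is odd only in the final scalar-rescaling step. By the Margulis commensurator theorem the image $\bar g \in \operatorname{PGL}_n(k\otimes_\Q\R)$ of $g$ lies in $\operatorname{PGL}_n(k)$; Hilbert~90 provides a lift $\tilde g \in \GL_n(k)$, well-defined modulo the central $k^\ast$. Because $g$ and $\tilde g$ differ by a central element, $\tilde g$ normalizes $\Lambda = \SL_n(\cO)$ in $\GL_n(k)$. The key local observation is that the normalizer of $\SL_n(\cO_v)$ in $\GL_n(k_v)$ equals $k_v^\ast\cdot\GL_n(\cO_v)$: the $\SL_n(\cO_v)$-invariant $\cO_v$-lattices in $k_v^n$ are exactly the scalar multiples $\pi_v^a\cO_v^n$, by a short Nakayama argument using the irreducibility of $\F_v^n$ as an $\SL_n(\F_v)$-module. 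Therefore $\tilde g_v = c_v u_v$ with $c_v \in k_v^\ast$, $u_v \in \GL_n(\cO_v)$, and $v(\det\tilde g) = n\,v(c_v)$ is divisible by $n$ at every finite place. Consequently $(\det\tilde g) = \mathfrak{a}^n$ for a unique fractional ideal $\mathfrak{a} \subset k$, and I set $\psi(g) := [\mathfrak{a}]$.

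The formal verifications are then routine. Replacing $\tilde g$ by $\lambda \tilde g$ for $\lambda \in k^\ast$ multiplies $\det\tilde g$ by $\lambda^n$ and hence $\mathfrak{a}$ by the principal ideal $(\lambda)$, so $\psi$ is well-defined; multiplicativity of $\det$ gives the homomorphism property; $[\mathfrak{a}]^n = [(\det\tilde g)] = 1$ places the image inside $\mathrm{Cl}_n(k)$; and for $g \in \Lambda$ the choice $\tilde g = g \in \SL_n(\cO)$ has $\det = 1$, so $\psi$ is trivial on $\Lambda$ and descends to a homomorphism $Q \to \mathrm{Cl}_n(k)$.

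The main (and essentially only) interesting point is surjectivity, which is where the assumption that $n$ is odd is used. Given $[\mathfrak{a}] \in \mathrm{Cl}_n(k)$, write $\mathfrak{a}^n = (\alpha)$. Since $n[\mathfrak{a}] = 0$ in $\mathrm{Cl}(k)$, the projective $\cO$-module $\mathfrak{a}\cdot\cO^n = \mathfrak{a}^{\oplus n}$ has trivial determinant and so is free of rank $n$. Any $\cO$-linear isomorphism $\phi\colon \cO^n \xrightarrow{\sim}\mathfrak{a}\cdot\cO^n$ extends by scalars to $\tilde g \in \GL_n(k)$ with $\tilde g(\cO^n) = \mathfrak{a}\cdot\cO^n$; top exterior powers give $(\det\tilde g) = \mathfrak{a}^n = (\alpha)$, and after replacing $\alpha$ by a unit associate one may assume $\det\tilde g = \alpha$. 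Locally $\tilde g_v(\cO_v^n) = \pi_v^{a_v}\cO_v^n$, so $\tilde g$ normalizes $\Lambda$. The hard part is to land in $H$ rather than in $\GL_n(k\otimes_\Q\R)$: one must divide $\tilde g$ by an $n$-th root $\beta \in k\otimes_\Q\R = \prod_{v\in V_\infty}k_v$ of $\alpha$, and this rescaling is precisely where the assumption of odd $n$ enters, because for $n$ odd every real number admits a real $n$-th root (and in $\C$ roots always exist), so the required $\beta$ exists. Setting $g := \beta^{-1}\tilde g$ gives an element of $\SL_n(k\otimes_\Q\R) = H$ which normalizes $\Lambda$ (scalars commute with conjugation) and whose lift in $\GL_n(k)$ modulo $k^\ast$ is again $\tilde g$, so $\psi(g) = [\mathfrak{a}]$. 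For $n$ even the construction would fail whenever $\alpha$ is negative at some real place, which is consistent with the restriction in the statement.
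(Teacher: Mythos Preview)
Your argument is correct but follows a genuinely different route from the paper's. The paper defines $\psi$ through the Rosenberg--Zelinsky isomorphism $\mathrm{Out}(M_n(\cO))\cong\mathrm{Cl}_n(k)$: since $M_n(\cO)$ is the $\cO$-span of $\SL_n(\cO)$, conjugation by any $g\in N_H(\Lambda)$ restricts to a ring automorphism of $M_n(\cO)$, and the composite $N_H(\Lambda)\to\mathrm{Aut}(M_n(\cO))\to\mathrm{Out}(M_n(\cO))$ kills $\Lambda$. Surjectivity is then obtained from Skolem--Noether over $k$ (every automorphism of $M_n(\cO)$ extends to conjugation by some $u\in\GL_n(k)$), followed by exactly the $n$-th-root rescaling step you carry out. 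You instead invoke the commensurator theorem together with Hilbert~90 to manufacture the lift $\tilde g\in\GL_n(k)$, and then define $\psi$ explicitly via the local valuations of $\det\tilde g$; for surjectivity you build $\tilde g$ directly from the freeness of $\mathfrak{a}^{\oplus n}$. Your approach is more explicit on the ideal-theoretic side and exhibits the map concretely, while the paper's is more elementary in that it avoids the commensurator theorem altogether---the simple observation that $M_n(\cO)$ is spanned by $\SL_n(\cO)$, plus Skolem--Noether, does the work that Margulis and Hilbert~90 do for you. The two constructions yield the same homomorphism $\psi$.
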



\begin{proof} The proof is based on some well known results in ring theory.

Let $A = \operatorname{M}_n(\cO)$. Recall that by a theorem of Rosenberg and Zelinsky \cite{RZ}, we have $$\mathrm{Out}(A)\cong \mathrm{Cl}_n(k).$$
An element $\gamma\in N_H(\Lambda)$ induces an automorphism $\theta$ of the matrix algebra $\operatorname{M}_n(\C)$ and, moreover, if $x \in A$, then $\theta(x) = \gamma^{-1}x\gamma \in A$ (this follows
from the fact that $\operatorname{M}_n(\cO)$ is the $\cO$-module spanned by $\SL_n(\cO)$). Therefore, we have a homomorphism \mbox{$f_1: N_H(\Lambda) \to \mathrm{Aut}(A)$} defined by $f_1(\gamma) = \theta$. Combining it with the natural epimorphism $f_2:\mathrm{Aut}(A) \to \mathrm{Out}(A)$, we get
\begin{align*}
N_H(\Lambda) &\stackrel{f_1}{\longrightarrow} \mathrm{Aut}(A) \stackrel{f_2}{\longrightarrow} \mathrm{Out}(A),\\
\Lambda      &\stackrel{f_1}{\longrightarrow} \mathrm{Inn}(A) \stackrel{f_2}{\longrightarrow} 1.
\end{align*}
Hence there is a homomorphism $\psi_0: N_H(\Lambda)/\Lambda \to \mathrm{Out}(A)$.

We want to show that $\psi_0$ is surjective. Any automorphism $\theta$ of $A$ extends to an automorphism of $A_k = A\otimes_\cO k$ which we denote by the same letter. By the Skolem--Noether theorem, every automorphism of $A_k$ is inner, so there exists an invertible element $u \in A_k = \operatorname{M}_n(k)$ such that for every $x\in A_k$, $\theta(x) = u^{-1}xu.$ Let $u_1 = u/(\det u)^{1/n}$. As $n$ is odd, we can choose the value of the root $(\det u)^{1/n}\in \R$ so that $u_1 \in \SL_n(k\otimes_\Q \R)$. Hence for every
$x\in \Lambda = \SL_n(\cO)$, $\theta(x) = u_1^{-1}xu_1 \in \Lambda$. This defines an element of $N_H(\Lambda)$ associated to $\theta$. As $\theta$ is an arbitrary automorphism of $A$, this shows that $f_1$, and hence $\psi_0$, is surjective. Note that the choice of the $n$-th root of $\det u$ which is involved in the construction of the preimage of $\theta$ does not allow us to claim that the inverse map would be a group homomorphism.

We proved that there exists an epimorphism $N_H(\Lambda)/\Lambda \to \mathrm{Out}(A)$ and it remains to apply the Rosenberg--Zelinsky theorem.
\end{proof}

\begin{rem}\label{rem_Cl_n}
The kernel of $\psi$ consists of those elements of $Q$ which come from the center of $H$, i.e. $\Ker(\psi) = Z(H)\Lambda$.
\end{rem}

%

We now show that if Theorem~\ref{theorem} holds for all $H$ of the form $\SL_n(\R)^{r_1}\times\SL_n(\C)^{r_2}$ for all $r_1$ and all $r_2$, then Conjecture~\ref{conj_h'} is true for all number fields $k$ and any odd prime $l$.

Let $k$ be a number field and let $\Lambda = \SL_l(\cO_k)$ be a corresponding principal arithmetic subgroup of $H = \SL_l(k\otimes_\Q \R)$. Then $\Lambda$ is a non-uniform lattice in $H$. The covolume of $\Lambda$ is given by Harder's Gauss--Bonnet formula (see \cite[Section~2.2]{Ha}):
\begin{equation}\label{eq_77}
\mu(H/\Lambda) = \D_k^{(l+1)(l-1)/2} \left(\prod_{j=1}^{l-1}\frac{j!}{(2\pi)^{j+1}}\right)^{[k:\Q]} \prod_{j=2}^l \zeta_k(j).
\end{equation}
(Here we use the normalization of $\mu$ from \cite{Pr}, the same result can be also obtained by Prasad's volume formula given there. The function $\zeta_k(j)$, which appears in the second product, is the Dedekind zeta function of the field $k$.) 

This implies that there exist positive constants $a, b, \delta$, which depend only on $H$, such that
\begin{equation}\label{eq_78}
a\D_k^{\delta} \le \mu(H/\Lambda) \le  b\D_k^{\delta}.
\end{equation}
Indeed, we can take $\delta = (l+1)(l-1)/2$ and then compute $a$ and $b$ from (\ref{eq_77}) using the basic fact that $1 \le \zeta_k(j) \le \zeta(j)^{[k:\Q]}$, where $\zeta(j)$ is the Riemann zeta function.

Now let us assume that Theorem~\ref{theorem} is true for $H = \SL_n(\R)^{r_1}\times\SL_n(\C)^{r_2}$ but Conjecture~\ref{conj_h'} is false for the fields of signature $(r_1, r_2)$ and some prime $l > 2$. So for any (large) number $r\in\R$, there exists such a field $k$ with $\rk_l(\mathrm{Cl}(k)) > r\log\D_k/\sqrt{\log\log \D_k}$, and thus $\mathrm{Cl}_l(k)$ has at least $l^{[\frac14 r^2\log^2 \D_k/(\log\log \D_k)]}$ subgroups (by \cite[Proposition~1.5.2]{LS}).

By Proposition~\ref{lemma_Cl_n}, $\Gamma/\Lambda$ admits an epimorphism onto $\mathrm{Cl}_l(k)$ where $\Lambda = \SL_l(\cO_k)$, $\Gamma = N_H(\Lambda)$ and $H = \SL_l(k\otimes_\Q \R)$. Let $x = \mu(H/\Lambda)$ and let $\Delta = Z(H)\Lambda$. We have:
$$s(\Gamma/\Delta) \ge s(\mathrm{Cl}_l(k)).$$
This gives a lower bound for the number of lattices between $\Gamma$ and $\Delta$.
If two such lattices $\Gamma_1$ and $\Gamma_2$ are conjugate by an element $h\in H$, then $h$ maps $\Lambda$ to $\Lambda$: Indeed, as $Q$ is nilpotent and $\Lambda = \SL_l(\cO_k)$, $l\ge 3$ is perfect (which can be checked, for example, by looking at the Steinberg relations defining it), $\Lambda$ is the unique maximal normal perfect subgroup of both $\Gamma_1$ and $\Gamma_2$ and hence $h$ takes it to itself, i.e. $h\in N_H(\Lambda)$. Now, as both $\Gamma_1$ and $\Gamma_2$ contain $\Delta$ and $N_H(\Lambda)/\Delta$ is abelian, we have $\Gamma_1 = \Gamma_2$. It follows that for $x = \mu(H/\Lambda)$, 
\begin{align*}
\rnu &\ge s(\Gamma/\Delta). 
\end{align*}
Hence we obtain
\begin{align*}
\rnu &\ge l^{[\frac14 r^2\log^2 \D_k/(\log\log \D_k)]};
\end{align*}
and using (\ref{eq_78}),
\begin{align*}
\rnu &\ge l^{cr^2\log^2 x/\log\log x};\\
\log \rnu &\ge (c\log l) r^2\log^2 x/\log\log x.
\end{align*}
As $r$ can be chosen arbitrary large this contradicts the assumption that Theorem~\ref{theorem} holds for $H$.

%

In summary, we obtain:

\begin{thm}\label{thm_equiv_conj}
\

\begin{itemize}
\item[(i)] If Conjecture \ref{conj_h} is true, then for any semisimple Lie group $H$ of real rank at least $2$ we have
$$\limsup_{x\to\infty} \frac{\log \rnu}{(\log x)^2 / \log\log x} \le \gamma(H).$$
\item[(ii)] If the bound in {\rm(i)} holds for $H = \SL_n(\R)^{r_1}\times\SL_n(\C)^{r_2}$ for every $n > 2$ and all $r_1$ and $r_2$, then Conjecture~\ref{conj_h'} is true for all number fields $k$ and all odd primes $l$.
\end{itemize}
\end{thm}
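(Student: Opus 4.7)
The plan is to establish (i) and (ii) separately, reusing the machinery of Sections~\ref{sec:ar}--\ref{sec:thm2 upper} together with Proposition~\ref{lemma_Cl_n}.

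For (i), I would retrace the proof of Theorem~\ref{theorem} in Section~\ref{sec:thm2 upper} and isolate the unique spot where the $2$-generic hypothesis was invoked, namely Step~12, where Proposition~\ref{cor:B}(ii) was used to bound $|Q| \le c_2^{\log x/\log\log x}$. As pointed out in Remark~\ref{rem_43}, under Conjecture~\ref{conj_h} the argument of \cite{B} instead delivers $|Q| = C^{o(\log x/\sqrt{\log\log x})}$ for arbitrary semisimple $H$ of real rank $\ge 2$. I would then verify that this weaker bound still suffices at Step~12: the number of subgroups of any section of $Q$ is at most $|Q|^{\log_2|Q|} = 2^{o((\log x)^2/\log\log x)}$, and this factor is absorbed into the $o(1)$ correction of the main term $s_x(\Lambda) = 2^{(\gamma(H)+o(1))(\log x)^2/\log\log x}$. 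Since no other step of Section~\ref{sec:thm2 upper} uses the $2$-generic condition, the upper bound $\limsup \le \gamma(H)$ carries over verbatim to every semisimple $H$.

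For (ii), I would argue by contrapositive. Assume the conclusion of (i) holds for every $H = \SL_n(\R)^{r_1}\times \SL_n(\C)^{r_2}$ with $n > 2$, but suppose Conjecture~\ref{conj_h'} fails for some signature $(r_1, r_2)$ and some odd prime $l$. The failure supplies a sequence of number fields $k$ of this signature along which $\rk_l(\mathrm{Cl}(k))/(\log\D_k/\sqrt{\log\log\D_k}) \to \infty$. For each such $k$ set $H = \SL_l(k \otimes_\Q \R)$, $\Lambda = \SL_l(\cO_k)$, $\Gamma = N_H(\Lambda)$, and $\Delta = Z(H)\Lambda$. By Proposition~\ref{lemma_Cl_n} and Remark~\ref{rem_Cl_n}, $\Gamma/\Delta$ surjects onto $\mathrm{Cl}_l(k)$, so the number of intermediate subgroups $\Delta \le \Gamma' \le \Gamma$ is at least the number of subgroups of $\mathrm{Cl}_l(k)$, which by \cite[Proposition~1.5.2]{LS} is at least $l^{\lfloor (\rk_l\mathrm{Cl}(k))^2/4 \rfloor}$. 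I would then convert this abundance of intermediate subgroups into a lower bound on $\rnu$: each $\Gamma'$ is a non-uniform lattice of covolume at most $\mu(H/\Lambda) =: x$, and distinct $\Gamma'$ lie in distinct $H$-conjugacy classes. Indeed, as $l \ge 3$, $\Lambda = \SL_l(\cO_k)$ is perfect, and since $\Gamma/\Lambda$ is abelian (Section~\ref{sec:ar}), $\Lambda$ is the unique maximal perfect normal subgroup of every $\Gamma'$ with $\Delta \le \Gamma' \le \Gamma$; any $h \in H$ conjugating $\Gamma'_1$ to $\Gamma'_2$ must then normalize $\Lambda$, hence lie in $\Gamma$, and abelianness of $\Gamma/\Delta$ forces $\Gamma'_1 = \Gamma'_2$. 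Finally, Harder's volume formula~\eqref{eq_77} yields the two-sided estimate~\eqref{eq_78}, so $x \asymp \D_k^{(l^2-1)/2}$, producing the lower bound $\log \rnu \ge c\,r^2 (\log x)^2/\log\log x$. Letting $r \to \infty$ contradicts the upper bound $\gamma(H)$ assumed in (i).

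The main obstacle is the non-conjugacy step in (ii): it rests on the perfectness of $\SL_l(\cO_k)$ for $l \ge 3$ (which I would invoke via the Bass--Milnor--Serre generation by elementary matrices combined with the standard commutator identity $[e_{ij}(a), e_{jk}(b)] = e_{ik}(ab)$) and on the abelianness of $\Gamma/\Lambda$ recalled in Section~\ref{sec:ar}. Once these structural inputs are in hand, (ii) reduces to the clean volume comparison~\eqref{eq_78}; (i) is by comparison a routine bookkeeping adjustment that absorbs the quantitative content of Remark~\ref{rem_43} into the already established upper-bound proof of Theorem~\ref{theorem}.
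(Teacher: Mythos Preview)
Your proposal is correct and follows essentially the same route as the paper: for (i) you identify Step~12 as the only place the $2$-generic hypothesis enters and absorb the weaker bound $|Q| = C^{o(\log x/\sqrt{\log\log x})}$ from Remark~\ref{rem_43} into the error term exactly as the paper indicates; for (ii) you reproduce the paper's contradiction argument via Proposition~\ref{lemma_Cl_n}, the perfectness of $\SL_l(\cO_k)$ for $l\ge 3$, the abelianness of $\Gamma/\Delta$ to rule out conjugacy, and the volume comparison~\eqref{eq_78}. The only cosmetic difference is that the paper cites the Steinberg relations for perfectness where you cite the elementary-matrix commutator identity, which is equivalent.
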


It is possible to elaborate further on the relation between Theorem~\ref{theorem} and Conjectures~3.3, in particular, Part~(ii) of Theorem~\ref{thm_equiv_conj} can be extended to other semisimple groups and prime $l = 2$, at least for totally imaginary fields. We shall not do it here. The main purpose of Theorem~\ref{thm_equiv_conj} is to highlight the intimate connection between counting non-uniform lattices and the class groups of fields.

\medskip

\noindent
{\bf Acknowledgements.} The authors would like to thank N.~Nikolov for sharing with us his know\-ledge about subgroup growth of lattices and valuable comments on the preliminary version of the paper. We also thank T.~Gelander, G.~Prasad and A.~Rapinchuk for helpful discussions. We are grateful to  M.~Jarden and P.~M\"uller for providing a proof of Lemma \ref{lemma_81} and we thank J.~Tsimerman for the reference to the paper by Brumer and Silverman.

\end{document}